\newtheorem{Theorem}{Theorem}[section]
\newtheorem{lemma}{Lemma}[section]
\theoremstyle{remark}
\newtheorem{remark}{Remark}[section]
\newcommand{\R}{{\mathbb R}}
\newcommand{\argmin}{{\rm argmin}\kern 0.12em}
\begin{document}

\title{COMBINING FAST INERTIAL DYNAMICS  FOR CONVEX OPTIMIZATION  WITH TIKHONOV REGULARIZATION.}

\author{Hedy Attouch}

\address{Institut Montpelli\'erain Alexander Grothendieck, UMR CNRS 5149, Universit\'e Montpellier 2, place Eug\`ene Bataillon,
34095 Montpellier cedex 5, France}
\email{hedy.attouch@univ-montp2.fr}

\author{Zaki Chbani}
\address{Cadi Ayyad university, Faculty of Sciences Semlalia, Mathematics, 40000 Marrakech, Morroco}
\email{chbaniz@uca.ma}

\date{February 5, 2016}


\keywords{Convex optimization; fast gradient methods;
FISTA algorithm; hierarchical minimization; inertial dynamics; Lyapunov analysis;  Nesterov accelerated method; Tikhonov approximation; vanishing viscosity}

\begin{abstract}
In a Hilbert space setting $\mathcal H$,  we study the  convergence properties as $t \to + \infty$ of the trajectories of  the second-order differential equation
\begin{equation*}
 \mbox{(AVD)}_{\alpha, \epsilon} \quad \quad \ddot{x}(t) + \frac{\alpha}{t} \dot{x}(t) + \nabla \Phi (x(t)) + \epsilon (t) x(t) =0,
\end{equation*}
where $\nabla\Phi$ is the gradient of a convex continuously differentiable function $\Phi: \mathcal H \to  \mathbb R$, $\alpha$ is a positive parameter, and $\epsilon (t) x(t)$ is a
Tikhonov regularization term, with $\lim_{t \to \infty}\epsilon (t) =0$. 
In this damped inertial system, the  damping coefficient $\frac{\alpha}{t}$ vanishes asymptotically, but not too quickly, a key property  to obtain
rapid convergence of the values. In the case $\epsilon (\cdot) \equiv 0$, 
this dynamic has been highlighted recently by Su, Boyd, and Cand\`es as a continuous version of the Nesterov accelerated method.
Depending on the speed of convergence of 
$\epsilon (t)$ to zero, we analyze the convergence properties of the trajectories of $\mbox{(AVD)}_{\alpha, \epsilon}$. We obtain results ranging from the rapid convergence of $\Phi (x(t))$ to $\min \Phi$ when $\epsilon (t)$ decreases rapidly to zero, up to the strong ergodic convergence of the trajectories to the element of minimal norm of the set of minimizers of $\Phi$, when $\epsilon (t)$ tends slowly to zero.

\end{abstract}

\thanks{With the support of  ECOS  grant C13E03,
Effort sponsored by the Air Force Office of Scientific Research, Air Force Material Command, USAF, under grant number FA9550-14-1-0056.}
\maketitle

\vspace{0.3cm}

\section{Introduction}
Throughout the paper, $\mathcal H$ is a real Hilbert space which is endowed with the scalar product $\langle \cdot,\cdot\rangle$, with $\|x\|^2= \langle x,x\rangle    $ for  $x\in \mathcal H$.
Let $\Phi : \mathcal H \rightarrow \mathbb R$ be a convex differentiable function,  whose gradient   $\nabla \Phi$ is Lipschitz continuous on  bounded sets.
We aim at solving  by rapid methods the convex minimization problem
\begin{equation}\label{edo0001}
 \min \left\lbrace  \Phi (x) : \ x \in \mathcal H \right\rbrace ,
\end{equation}
whose solution set $S=\argmin \Phi$ is supposed to be nonempty.
To that end, we  study the asymptotic behaviour (as $t \to + \infty$) of the trajectories of the second-order differential equation 
\begin{equation}\label{edo01}
 \mbox{(AVD)}_{\alpha, \epsilon} \quad \quad \ddot{x}(t) + \frac{\alpha}{t} \dot{x}(t) + \nabla \Phi (x(t)) + \epsilon (t) x(t) =0,
\end{equation}
where $\alpha$  is a positive parameter,  and $\epsilon (t) x(t)$ is a
Tikhonov regularization term. Troughout the paper we assume that

\smallskip

\begin{center}
\textit{ $\epsilon : [t_0 , +\infty [ \to \mathbb R^+ $ is   a nonincreasing function, of class $\mathcal C^1$, and \  $\lim_{t \to \infty}\epsilon (t) =0$.}
\end{center}
  The system 
\begin{equation}\label{edo001}
 \mbox{(AVD)}_{\alpha} \quad \quad \ddot{x}(t) + \frac{\alpha}{t} \dot{x}(t) + \nabla \Phi (x(t)) =0,
\end{equation}
which corresponds to  the case $\epsilon (\cdot)\equiv 0$,  has been introduced by  Su, Boyd and 
Cand\`es in \cite{SBC}, 
as a continuous version of the  Nesterov accelerated method, see  \cite{Nest1}-\cite{Nest2}-\cite{Nest3}-\cite{Nest4}, and of the FISTA algorithm, see \cite{BT}.
For $\alpha \geq 3$, its trajectories satisfy the fast minimization property $ \Phi(x(t))-  \min_{\mathcal H}\Phi = \mathcal O (t^{-2})$, which is known to be the best 
possible estimate (in the worst case).
When $\alpha >3$, the weak convergence of the trajectories  was recently obtained by Attouch, Chbani, Peypouquet, and Redont in \cite{ACPR}, making the connection to the algorithmic results of Chambolle and Dossal \cite{CD}. We use the terminology introduced in \cite{ACPR}, where $\mbox{(AVD)}_{\alpha}$ stands for Asymptotic Vanishing Damping with parameter $\alpha$. Linking convergence of continuous dissipative systems and algorithms is an ongoing research topic, the  reader may consult \cite{HJ}, \cite{PB}, \cite{Pey}, \cite{Pey-Sor}.
Through the study of  $\mbox{(AVD)}_{\alpha, \epsilon} $, we seek to combine  the rapid optimization property of the system $\mbox{(AVD)}_{\alpha}$, with the property of strong convergence of trajectories to the solution of minimum norm. This latter  property   is typically attached to the Tikhonov approximation.

An abundant litterature has been devoted to the asymptotic hierarchical minimization property which results from the introduction of a vanishing viscosity term (in our context the Tikhonov approximation) in the dynamic. For first-order gradient systems and subdifferential inclusions, see \cite{AlvCab}, \cite{Att2},   \cite{AttCom}, \cite{AttCza2}, \cite{BaiCom} , \cite{CPS}, \cite{Hirstoaga}. 
Discrete time versions of these results provide algorithms combining  proximal based methods (for example forward-backward algorithms), with viscosity of penalization methods, see
\cite{AttCzaPey1}, \cite{AttCzaPey2}, \cite{BotCse}, \cite{Cab}, \cite{Hirstoaga}.

\medskip

A closely related dynamic to  $\mbox{(AVD)}_{\alpha, \epsilon} $  is the heavy ball with friction method  with a Tikhonov regularization term
\begin{equation}\label{HBF-Tikh}
  \mbox{(HBF)}_{\epsilon} \quad \quad \ddot{x}(t) + \gamma \dot{x}(t) + \nabla \Phi (x(t)) + \epsilon (t) x(t) =0.
\end{equation}
By contrast with $\mbox{(AVD)}_{\alpha, \epsilon} $, in $\mbox{(HBF)}_{\epsilon} $ the damping coefficient $\gamma$ 
is a fixed positive real number. The heavy ball with friction system, which corresponds to $\epsilon=0$ in $\mbox{(HBF)}_{\epsilon} $  is a dissipative dynamical system whose  optimization properties have been studied in detail in several articles, see \cite{AAC}, \cite{Al},     \cite{AA1}, \cite{aabr}, \cite{ACR}, \cite{AGR}, \cite{HJ}.
In \cite{AttCza1},  in the slow parametrization case $\int_0^{+\infty} \epsilon (t) dt = + \infty$, it is proved that  any solution $x(\cdot)$ of $\mbox{(HBF)}_{\epsilon} $  converges strongly to the minimum norm element of $\argmin \Phi$. But, without additional assumption on $\Phi$, no fast convergence result has been obtained for $\mbox{(HBF)}_{\epsilon}$.
 A parallel study has been developed  for PDE's, see \cite{AA} for damped hyperbolic equations with non-isolated equilibria, and 
\cite{AlvCab} for semilinear PDE's.  

As an original aspect of our approach (and a source of difficulties), through the study of $\mbox{(AVD)}_{\alpha, \epsilon}$ system,    we wish to  simultaneously handle the two vanishing parameters, the damping parameter and the Tikhonov parameter. Ideally, we want to achieve both rapid convergence and convergence towards the  minimum norm solution. As we shall see, this is a difficult task, since the two requirements are some way antagonistic.

Let us fix some $t_0 >0$, as a starting time. Taking $t_0 >0$ comes from the singularity of the damping coefficient $a(t) = \frac{\alpha}{t}$ at zero. Indeed, since we are only concerned about the asymptotic behaviour of the trajectories, we do not really care about the origin of time that is taken. If one insists starting from $t_0 =0$, then all the results remain valid  taking $a(t) = \frac{\alpha}{t+1}$.

Depending on the speed of convergence of 
$\epsilon (t)$ to zero, and the value of the positive parameter $\alpha$,   we analyze the convergence properties of the trajectories of $\mbox{(AVD)}_{\alpha, \epsilon}$. We obtain results ranging from the rapid convergence of the values when $\epsilon (t)$ decreases rapidly to zero, up to the convergence to the element of minimum norm of of $\argmin \Phi$, when $\epsilon (t)$ tends slowly to zero. Precisely,

\begin{description}
\item [A]
 In the "\textit{fast vanishing case}" $\int_{t_0}^{+\infty} \frac{\epsilon (t)}{t} dt <+ \infty$, just assuming that $\alpha >1$,
 we  show in Theorem \ref{Thm-weak-conv2} that, for any
 global solution trajectory of (\ref{edo01}), 
the following  minimizing property holds 
\begin{equation}\label{min01}
\lim_{t \rightarrow + \infty} \Phi (x(t)) = \inf_{\mathcal H} \Phi . 
\end{equation}
Under the stronger condition $\int_{t_0}^{+\infty} t \epsilon (t) dt < + \infty$,  assuming that $\alpha \geq 3$,  we  show  in Theorem \ref{tikh-fast} that any  global solution trajectory of (\ref{edo01}) satisfies the fast minimization property  
\begin{align}\label{basic-fast}
  \Phi(x(t))-  \min_{\mathcal H}\Phi \leq \frac{C}{t^2}.
\end{align}
When $\epsilon (\cdot) \equiv 0$, we recover  the fast convergence of the values  obtained by Su, Boyd and 
Cand\`es in \cite{SBC}. \\
When   $\alpha > 3$, in accordance with the convergence result of Attouch, Chbani, Peypouquet and Redont  \cite{ACPR}, we show that any global solution trajectory of (\ref{edo01})
converges weakly to a minimizer of $\Phi$. 
  
\medskip

\item [B] In the "\textit{slow vanishing case}" $\int_{t_0}^{+\infty} \frac{\epsilon (t)}{t} dt =+ \infty$, just assuming that $\alpha >1$, we show in 
Theorem \ref{Thm-strong-conv-Tikhonov} that for any global solution trajectory of (\ref{edo01}), the following ergodic convergence result holds
\begin{equation}\label{Tikh-002}
\lim_{t \rightarrow + \infty} \frac{1}{\int_{t_0}^t  \frac{\epsilon (\tau)}{\tau} } \int_{t_0}^t  \frac{\epsilon (\tau)}{\tau} \|x(\tau) - p \|  d \tau =0 , 
\end{equation}
where $p$ is the element of minimum norm of $\argmin \Phi$. Moreover
\begin{equation}\label{Tikh-02}
\liminf_{t \rightarrow + \infty} \|x(t) - p \| =0. 
\end{equation}
Convergence with a limit, instead of a lower limit, or a ergodic limit, is still an open puzzling question.

\end{description}

\section{Preliminary results and estimations.}

The existence of global solutions to \eqref{edo01} has been examined, for instance, in \cite[Proposition 2.2.]{CEG1} in the case of a general asymptotic vanishing damping coefficient, see also \cite{AttCza1} in the case of a fixed damping parameter. It is based on the formulation of \eqref{edo01} as a first-order system. Then apply Cauchy-Lipschitz theorem, and use energy estimates to pass from a local to a global solution. In our setting, for any $t_0 >0$, $\alpha >0$, and  $(x_0,v_0)\in \mathcal H\times\mathcal H$, there exists a unique global classical solution $x : [t_0, +\infty[ \rightarrow \mathcal H$ of \eqref{edo01}, satisfying the initial condition $x(t_0)= x_0$, $\dot{x}(t_0)= v_0$, under the sole assumption that $\inf\Phi > - \infty$. 

\bigskip

At different points, we shall use the {\it global energy} of the system, given by $W:[t_0,+\infty [\rightarrow\R$ 
\begin{equation}\label{E:W}
 W(t) :=   \frac{1}{2}\|\dot{x}(t)\|^2 + \Phi (x(t))
 + \frac{\epsilon (t)}{2} \|x((t)\|^2 .
\end{equation}

 After scalar multiplication of (\ref{edo01}) by $\dot{x}(t)$ we obtain
\begin{equation}\label{wconv-7}
 \frac{d}{dt} W(t) =   -\frac{\alpha}{t}\|\dot{x}(t)\|^2 
+ \frac{1}{2}\dot{\epsilon} (t)   \|x(t)\|^2. 
\end{equation}
By assumption $\dot{\epsilon} (t) \leq 0$, from which we deduce the following dissipative property.

\begin{lemma}\label{L:W}
Let $W$ be defined by \eqref{E:W}. For each $t>t_0$, we have
$$\frac{d}{dt} W(t) \leq -\frac{\alpha}{t}\|\dot{x}(t)\|^2.$$ 

i) Hence, $W$ is nonincreasing, and $W_\infty=\lim_{t\rightarrow+\infty}W(t)$ exists in $\R\cup\{-\infty\}$.  

\smallskip

ii) If $\Phi$ is bounded from below, then $W_\infty$ is finite, and 
\begin{equation}\label{wconv-7c}
 \int_{t_0}^{+\infty}\frac{1}{t}\|\dot{x}(t)\|^2 dt
\leq \frac{1}{\alpha} \left( W(t_0) - \inf_{\mathcal H} \Phi\right)  < + \infty.
\end{equation}
\end{lemma}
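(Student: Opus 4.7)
The plan is to build directly on the identity \eqref{wconv-7}, which the excerpt has already derived by taking the scalar product of \eqref{edo01} with $\dot x(t)$. The key observation for the differential inequality is that, since $\epsilon$ is assumed nonincreasing and of class $\mathcal C^1$, we have $\dot\epsilon(t)\leq 0$, so the term $\tfrac{1}{2}\dot\epsilon(t)\|x(t)\|^2$ in \eqref{wconv-7} is nonpositive. Dropping it yields the claimed bound $\frac{d}{dt}W(t)\leq -\frac{\alpha}{t}\|\dot x(t)\|^2$.

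For part (i), the right-hand side is nonpositive for $t>t_0>0$, hence $W$ is nonincreasing on $[t_0,+\infty[$. A monotone real-valued function on an interval admits a limit in $\R\cup\{-\infty\}$, which gives the existence of $W_\infty$.

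For part (ii), the first task is to check that $W$ is bounded from below when $\inf_{\mathcal H}\Phi>-\infty$. Since $\epsilon(t)\geq 0$ by assumption,
\[
W(t)\;\geq\;\Phi(x(t))+\tfrac{\epsilon(t)}{2}\|x(t)\|^2\;\geq\;\inf_{\mathcal H}\Phi,
\]
so the monotone limit $W_\infty$ is finite and bounded below by $\inf_{\mathcal H}\Phi$. Integrating the differential inequality from $t_0$ to $t$ gives
\[
\alpha\int_{t_0}^{t}\frac{1}{s}\|\dot x(s)\|^2\,ds\;\leq\;W(t_0)-W(t)\;\leq\;W(t_0)-\inf_{\mathcal H}\Phi.
\]
Letting $t\to+\infty$ and invoking monotone convergence on the integrand yields \eqref{wconv-7c}.

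Since the computationally delicate step, namely the energy identity \eqref{wconv-7}, is provided, no real obstacle remains beyond carefully recording the sign conditions on $\epsilon$ and on $W$. The only subtlety worth flagging is to resist asserting $W_\infty\geq\inf_{\mathcal H}\Phi$ in (i) without the extra hypothesis used in (ii): without a lower bound on $\Phi$, the Tikhonov term $\frac{\epsilon(t)}{2}\|x(t)\|^2$ does not prevent $W_\infty=-\infty$, which is exactly why the finiteness statement is placed in part (ii).
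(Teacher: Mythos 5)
Your proof is correct and follows essentially the same route as the paper: drop the nonpositive term $\tfrac{1}{2}\dot\epsilon(t)\|x(t)\|^2$ from the energy identity \eqref{wconv-7} using $\dot\epsilon\leq 0$, deduce monotonicity of $W$, bound $W$ below by $\inf_{\mathcal H}\Phi$ via $\epsilon\geq 0$, and integrate the differential inequality to obtain \eqref{wconv-7c}. Nothing is missing.
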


Now, given $z\in\mathcal H$, we define $h_z:[t_0,+\infty [ \to\R$ by
\begin{equation} \label{E:h_z}
h_z(t)=\frac{1}{2}\|x(t)-z\|^2.
\end{equation} 

By the Chain Rule, we have
$$\dot{h}_z(t) = \langle x(t) - z , \dot{x}(t)  \rangle\qquad\hbox{and}\qquad 
\ddot{h}_z(t) = \langle x(t) - z , \ddot{x}(t)  \rangle + \| \dot{x}(t) \|^2.$$
Using \eqref{edo01}, we obtain
\begin{equation} \label{wconv3}
\ddot{h}_z(t) + \frac{\alpha}{t} \dot{h}_z(t) =  \| \dot{x}(t) \|^2 + \langle x(t) - z , \ddot{x}(t) + \frac{\alpha}{t} \dot{x}(t)  \rangle =  \| \dot{x}(t) \|^2 -  \langle x(t) - z , \nabla \Phi (x(t)) + \epsilon (t) x(t)\rangle .
\end{equation} 
The convexity of $ \Phi$ implies
$$ \langle x(t) - z , \nabla \Phi (x(t))  \rangle \geq  \Phi (x(t)) - \Phi (z),$$
and we deduce that
\begin{equation} \label{E:ineq_h_z}
\ddot{h}_z(t) + \frac{\alpha}{t} \dot{h}_z(t) +\Phi (x(t)) - \Phi (z)  \leq \|\dot{x}(t)\|^2 - \epsilon (t)  \langle x(t) - z ,  x(t)\rangle.
\end{equation} 

Introducing $W$ in this expression we obtain the following differential inequality, that will play a central role in the convergence analysis of the trajectories of \eqref{edo01}.

\begin{lemma}\label{L:Wh_z}
Take $z\in \mathcal H$, and let $W$ and $h_z$ be defined by \eqref{E:W} and \eqref{E:h_z}, respectively. Then
\begin{equation} \label{E:ineq_h{_z}-ba}
\ddot{h}_z(t) + \frac{\alpha}{t} \dot{h}_z(t) + W(t) - \Phi (z)  \leq \frac{3}{2}\|\dot{x}(t)\|^2 + \frac{1}{2}\epsilon (t) \left(  2 \langle x(t),  z\rangle - \|x(t)\|^2\right)  .
\end{equation}
As a consequence, 
\begin{equation} \label{E:ineq_h{_z}-b}
\ddot{h}_z(t) + \frac{\alpha}{t} \dot{h}_z(t) + W(t) - \Phi (z)  \leq \frac{3}{2}\|\dot{x}(t)\|^2 + \frac{1}{2}\epsilon (t)  \|z\|^2  .
\end{equation}
\end{lemma}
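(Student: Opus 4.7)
The plan is to derive both inequalities directly from the inequality \eqref{E:ineq_h_z} already established in the preceding paragraph, by algebraically reshaping the right-hand side to expose the global energy $W(t)$ and then bounding a quadratic in $x(t)$.

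First I would rewrite the definition of $W$ to see exactly what needs to be added to both sides of \eqref{E:ineq_h_z}. Since $W(t)-\Phi(z) = \tfrac12\|\dot x(t)\|^2 + \bigl(\Phi(x(t))-\Phi(z)\bigr) + \tfrac{\epsilon(t)}{2}\|x(t)\|^2$, the natural move is to add $\tfrac12\|\dot x(t)\|^2 + \tfrac{\epsilon(t)}{2}\|x(t)\|^2$ to both sides of \eqref{E:ineq_h_z}. On the left this turns the $\Phi(x(t))-\Phi(z)$ term into $W(t)-\Phi(z)$, and on the right the $\|\dot x(t)\|^2$ becomes $\tfrac32\|\dot x(t)\|^2$.

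It then remains only to simplify the $\epsilon(t)$-terms on the right-hand side. Expanding the inner product gives
\[
\tfrac{\epsilon(t)}{2}\|x(t)\|^2 - \epsilon(t)\langle x(t)-z,\, x(t)\rangle
 = \tfrac{\epsilon(t)}{2}\|x(t)\|^2 - \epsilon(t)\|x(t)\|^2 + \epsilon(t)\langle z, x(t)\rangle
 = \tfrac{\epsilon(t)}{2}\bigl(2\langle x(t), z\rangle - \|x(t)\|^2\bigr),
\]
which is exactly the expression appearing in \eqref{E:ineq_h{_z}-ba}. This establishes the first inequality.

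For the second inequality, I would simply invoke the elementary bound $\|x(t)-z\|^2\geq 0$, which rearranges to $2\langle x(t), z\rangle - \|x(t)\|^2 \leq \|z\|^2$. Multiplying by $\tfrac{\epsilon(t)}{2}\geq 0$ and substituting into \eqref{E:ineq_h{_z}-ba} yields \eqref{E:ineq_h{_z}-b}. There is no real obstacle here, the proof is pure bookkeeping; the only thing to double-check is the sign convention on $\epsilon(t)$ (nonnegative by hypothesis) so that the polarization bound preserves the direction of the inequality.
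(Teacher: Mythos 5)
Your proof is correct and follows exactly the route the paper intends (the paper merely says ``Introducing $W$ in this expression'' after \eqref{E:ineq_h_z} and leaves the computation implicit): add $\tfrac12\|\dot x(t)\|^2+\tfrac{\epsilon(t)}{2}\|x(t)\|^2$ to both sides of \eqref{E:ineq_h_z}, simplify the $\epsilon(t)$-terms, and then use $2\langle x(t),z\rangle-\|x(t)\|^2\leq\|z\|^2$ together with $\epsilon(t)\geq0$. Nothing is missing.
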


\section{Fast vanishing case $\int_{t_0}^{+\infty} \frac{\epsilon (t)}{t} dt <+ \infty$} $\mbox{ }$
In the fast vanishing case $\int_{t_0}^{+\infty} \frac{\epsilon (t)}{t} dt <+ \infty$, we  obtain a minimization property, but  without any further information concerning the limiting behaviour of the trajectories.  Strengthening the hypothesis of rapid parametrization $\int_{t_{0}}^{+\infty} t\epsilon (t)dt < +\infty$, we  obtain a \textit{fast minimization} property, and  \textit{convergence of the  trajectories} to minimizers, but  without a precise identification of the limit.  Asymptoticaly, the regularizing term is not large enough  to induce a  viscosity selection.
Note that  despite  the fact that it is not active asymptotically, the Tikhonov term makes the dynamic governed at all times $t$ by a strongly monotone operator, which induces favorable numerical aspects.

\subsection{Minimizing property}

Let us examine the
minimizing property of the trajectories, $\lim_{t \rightarrow + \infty} \Phi (x(t)) = \inf_{\mathcal H} \Phi$.
\begin{Theorem} \label{Thm-weak-conv2}
Let $\Phi : \mathcal H \rightarrow \mathbb R$ be a convex continuously differentiable function 
such that   $\inf_{\mathcal H} \Phi > - \infty$ (the set $\argmin \Phi$ is possibly empty).
Suppose that $\alpha >1$. Let $\epsilon : [t_0 , +\infty [
\to \mathbb R^+ $ be a $\mathcal C^1$ decreasing function such that
\begin{equation}\label{basic-epsilon1}
\int_{t_0}^{+\infty} \frac{\epsilon (t)}{t} dt <+ \infty .
\end{equation}
Let $x(\cdot)$ be a  classical global solution  of $ \mbox{(AVD)}_{\alpha, \epsilon} $. 
Then, the following  minimizing property holds 
\begin{equation}\label{min1}
\lim_{t \rightarrow + \infty} \Phi (x(t)) = \inf_{\mathcal H} \Phi, 
\end{equation}
and
\begin{equation}\label{min10}
\lim_{t \to +\infty}  \|\dot{x}(t)\| = 0 .
\end{equation}
\end{Theorem}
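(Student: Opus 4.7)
The strategy is to identify $W_\infty := \lim_{t\to\infty} W(t)$ with $\inf\Phi$, from which both conclusions read off at once. Lemma \ref{L:W}(ii) already yields $W_\infty \in \R$ together with the integrability $\int_{t_0}^{+\infty} \|\dot{x}(t)\|^2/t\,dt < +\infty$. Since $\epsilon(t)\geq 0$, one has $W(t) \geq \Phi(x(t)) \geq \inf \Phi$, hence $W_\infty \geq \inf \Phi$; the whole issue is thus to establish the reverse inequality.

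To prove $W_\infty \leq \inf\Phi$, I argue by contradiction. Suppose there exists $z \in \mathcal H$ with $\delta := W_\infty - \Phi(z) > 0$. Using $W(t) \geq W_\infty$ in Lemma \ref{L:Wh_z}, I obtain
\begin{equation*}
\ddot{h}_z(t) + \frac{\alpha}{t}\dot{h}_z(t) \leq -\delta + \frac{3}{2}\|\dot{x}(t)\|^2 + \frac{\epsilon(t)}{2}\|z\|^2.
\end{equation*}
The key idea is to divide by $t$ before integrating from $t_0$ to $T$. This is the unique weight compatible with the two available integrabilities: the hypothesis \eqref{basic-epsilon1} together with the estimate \eqref{wconv-7c} shows that the integrated right-hand side is bounded above by $-\delta \log(T/t_0) + C$ with $C$ finite. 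On the left-hand side, integration by parts gives
\begin{equation*}
\int_{t_0}^T \frac{\ddot{h}_z(s) + \frac{\alpha}{s}\dot{h}_z(s)}{s}\,ds = \frac{\dot{h}_z(T)}{T} - \frac{\dot{h}_z(t_0)}{t_0} + (1+\alpha)\int_{t_0}^T \frac{\dot{h}_z(s)}{s^2}\,ds,
\end{equation*}
and a further integration by parts combined with $h_z \geq 0$ bounds the last integral below by $-h_z(t_0)/t_0^2$. Putting things together, $\dot{h}_z(T)/T \leq -\delta \log(T/t_0) + C'$, so $\dot{h}_z(T) \leq -(\delta/2)\,T\log(T/t_0)$ for $T$ sufficiently large. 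One more integration in $T$ then forces $h_z(T) \to -\infty$, contradicting $h_z \geq 0$. Hence $W_\infty \leq \Phi(z)$ for every $z \in \mathcal H$, and therefore $W_\infty = \inf \Phi$.

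To finish, I rewrite
\begin{equation*}
W(t) - \inf \Phi = \frac{1}{2}\|\dot{x}(t)\|^2 + \bigl(\Phi(x(t)) - \inf \Phi\bigr) + \frac{\epsilon(t)}{2}\|x(t)\|^2.
\end{equation*}
All three summands on the right are non-negative, and their sum tends to $0$; therefore each tends to $0$, yielding simultaneously $\lim_{t\to\infty}\|\dot{x}(t)\| = 0$ and $\lim_{t\to\infty}\Phi(x(t)) = \inf \Phi$.

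The crux of the argument is the choice of integrating factor. Naively integrating the inequality from Lemma \ref{L:Wh_z}, or multiplying by $t$, produces right-hand-side terms requiring $\int \|\dot x\|^2\,dt$ or $\int t\,\epsilon(t)\,dt$ to be finite, which the weak hypothesis $\int \epsilon(t)/t\,dt < +\infty$ does not guarantee. The weight $1/t$ is exactly matched to the available $L^1$ estimates, and the non-negativity of $h_z$ is just strong enough to absorb the boundary and Liouville-type contributions produced by the two integrations by parts.
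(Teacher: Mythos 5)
Your proof is correct, and it takes a genuinely different route from the paper's. Both arguments start from the same two ingredients --- the monotonicity of $W$ from Lemma \ref{L:W} and the differential inequality \eqref{E:ineq_h{_z}-b} of Lemma \ref{L:Wh_z}, combined with the two integrable quantities $\int_{t_0}^{\infty}\frac{1}{t}\|\dot x(t)\|^2dt$ and $\int_{t_0}^{\infty}\frac{\epsilon(t)}{t}dt$ --- but they exploit them differently. The paper integrates twice against the natural integrating factor $t^{\alpha}$, applies Fubini, divides by $t^2$, and invokes the Kronecker-type Lemma \ref{basic-int} to conclude $\limsup_{t\to\infty}W(t)\leq \Phi(z)$ directly; this produces the explicit quantitative inequality \eqref{wconv-18}, but it uses $\alpha>1$ in an essential way (through $\int_{t_0}^{\infty}s^{-\alpha}ds<\infty$ and the Fubini bound $\int_{\tau}^{t}s^{-\alpha}ds\leq \frac{1}{\alpha-1}\tau^{1-\alpha}$). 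You instead argue by contradiction with the weight $1/t$, and the two integrations by parts together with $h_z\geq 0$ force $\dot h_z(T)\leq -\frac{\delta}{2}T\log(T/t_0)$ and hence $h_z(T)\to-\infty$; all the steps check out (the boundary terms have the right signs, and the right-hand side is indeed $-\delta\log(T/t_0)+O(1)$ by \eqref{wconv-7c} and \eqref{basic-epsilon1}). Your argument is more elementary --- it bypasses Fubini and Lemma \ref{basic-int} entirely --- and, notably, it only requires $1+\alpha>0$ in the integration-by-parts step and $\alpha>0$ for \eqref{wconv-7c}, so it actually establishes the theorem under the weaker hypothesis $\alpha>0$. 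What you give up is the quantitative control on $W(t)-\Phi(z)$ in terms of $\frac{1}{t^2}\int_{t_0}^{t}\tau g(\tau)d\tau$ that the paper's double integration provides, which is the template reused in the fast-rate analysis of Theorem \ref{tikh-fast}. The concluding step --- identifying $W_\infty=\inf_{\mathcal H}\Phi$ and reading off \eqref{min1} and \eqref{min10} from the non-negativity of the three summands of $W(t)-\inf_{\mathcal H}\Phi$ --- coincides with the paper's.
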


\begin{proof}
Let $z\in \mathcal H$, taken arbitrary. By Lemma \ref{L:Wh_z}, 
the function  $h: [t_0, +\infty[ \rightarrow \mathbb R^+$ defined by
$ h(t) = \frac{1}{2}\| x(t) - z\|^2$
 satisfies the differential inequality
\begin{equation}\label{wconv-8b}
 \ddot{h}(t) + \frac{\alpha}{t} \dot{h}(t) + W(t) - \Phi (z)  \leq g(t)
\end{equation}
where
\begin{equation}\label{wconv-8c}
g(t):= \frac{3}{2}\|\dot{x}(t)\|^2 + \frac{\epsilon (t)}{2} \|z\|^2  .
\end{equation}
 Let us  proceed with the integration of the differential  inequality (\ref{wconv-8b}).
After multiplication of
 (\ref{wconv-8b}) by $t^{\alpha}$ we have
\begin{equation*}
 \frac{d}{dt}\left( t^{\alpha}  \dot{h}(t)\right)  + t^{\alpha} \left(  W(t) - \Phi (z) \right)    \leq t^{\alpha} g(t) .
\end{equation*}
Integrating from $t_0$ to $t$ we obtain
\begin{equation*}
t^{\alpha}  \dot{h}(t) - t_0^{\alpha}  \dot{h}(t_0) 
+ \int_{t_0}^t  s^{\alpha} \left(  W(s) - \Phi (z) \right) ds
\leq \int_{t_0}^t  s^{\alpha} g(s) ds.
\end{equation*}
Since the function $W(\cdot)$ is nonincreasing (see Lemma \ref{L:W}), we deduce that
\begin{equation*}
t^{\alpha}  \dot{h}(t) - t_0^{\alpha}  \dot{h}(t_0) 
+ \left(  W(t) - \Phi (z) \right)\int_{t_0}^t  s^{\alpha}  ds
\leq \int_{t_0}^t  s^{\alpha} g(s) ds,
\end{equation*}
which, after division by $t^{\alpha}$, gives
\begin{equation*}
  \dot{h}(t) 
+ t^{-\alpha}\left(  W(t) - \Phi (z) \right)\int_{t_0}^t  s^{\alpha}  ds
\leq \frac{ t_0^{\alpha}  \dot{h}(t_0)}{t^{\alpha}} +
t^{-\alpha}\int_{t_0}^t  s^{\alpha} g(s) ds.
\end{equation*}
Integrating once more from $t_0$ to $t$ we obtain
\begin{equation*}
  h(t) - h(t_0) 
+ \int_{t_0}^t  s^{-\alpha}\left(  W(s) - \Phi (z) \right)\left(\int_{t_0}^s  {\tau}^{\alpha}  d\tau \right) ds 
\leq  t_0^{\alpha}  \dot{h}(t_0)\int_{t_0}^t {s^{-\alpha}}ds +
\int_{t_0}^t  s^{-\alpha} \left( \int_{t_0}^s  {\tau}^{\alpha} g(\tau) d\tau \right)ds .
\end{equation*}
Using again that $W(\cdot)$ is nonincreasing, we deduce that
\begin{equation*}
  h(t) - h(t_0) 
+ \left(W(t) - \Phi (z) \right ) \int_{t_0}^t  s^{-\alpha}\left(\int_{t_0}^s  {\tau}^{\alpha}  d\tau \right) ds 
\leq  \frac{1}{\alpha -1} t_0  |\dot{h}(t_0)| +
\int_{t_0}^{t}  s^{-\alpha} \left( \int_{t_0}^s  {\tau}^{\alpha} g(\tau)  d\tau \right)ds .
\end{equation*}
Computing the  first integral, and since $h$ is nonnegative, we obtain
\begin{equation}\label{wconv-16}
\frac{1}{\alpha + 1}\left(W(t) - \Phi (z) \right ) \left(              \frac{t^2}{2} - \frac{{t_0}^2}{2} +   \frac{{t_0}^{\alpha +1}}{(\alpha - 1)t^{\alpha -1} }  - \frac{{t_0}^2}{\alpha -1}       \right) 
\leq  h(t_0) + \frac{1}{\alpha -1} t_0  |\dot{h}(t_0)| +
\int_{t_0}^{t}  s^{-\alpha} \left( \int_{t_0}^s  {\tau}^{\alpha} g(\tau) d\tau \right)ds .
\end{equation}
Let us compute this last integral by Fubini's theorem
\begin{align*}
\int_{t_0}^{t}  s^{-\alpha} \left( \int_{t_0}^s  {\tau}^{\alpha} g(\tau)  d\tau \right)ds &=
\int_{t_0}^{t}   \left( \int_{\tau}^t  s^{-\alpha}ds  \right){\tau}^{\alpha} g(\tau)  d\tau  \\
&   = \frac{1}{\alpha -1} \int_{t_0}^{t}  \left(       \frac{1}{{\tau}^{\alpha -1}} - \frac{1}{{t}^{\alpha -1}}\right)
{\tau}^{\alpha} g(\tau)  d\tau  \\
&   \leq \frac{1}{\alpha -1} \int_{t_0}^{t}  
\tau g(\tau)  d\tau .
\end{align*}
Returning to (\ref{wconv-16}), we deduce that
\begin{equation}\label{wconv-18}
\frac{1}{\alpha + 1}\left(W(t) - \Phi (z) \right ) \left(              \frac{t^2}{2} - \frac{{t_0}^2}{2} +   \frac{{t_0}^{\alpha +1}}{(\alpha - 1)t^{\alpha -1} }  - \frac{{t_0}^2}{\alpha -1}       \right) 
\leq h(t_0)+ \frac{1}{\alpha -1} t_0  |\dot{h}(t_0)| +
\frac{1}{\alpha -1} \int_{t_0}^{t}  
\tau g(\tau)  d\tau .
\end{equation}
By (\ref{wconv-7c}) we have 
$\int_{t_0}^{\infty} \frac{1}{t}\|\dot{x}(t)\|^2 dt <+\infty$. 
By assumption
$\int_{t_0}^{+\infty} \frac{1}{t} \epsilon (t)dt <+\infty $.
Hence, by definition  (\ref{wconv-8c}) of $g$, we have
\begin{equation}\label{wconv-71}
\int_{t_0}^{\infty} \frac{1}{t}g(t) dt <+\infty. 
\end{equation}
Let us rewrite (\ref{wconv-18}) as 
\begin{equation*}
\frac{1}{\alpha + 1}\left(W(t) - \Phi (z) \right ) \left(              \frac{t^2}{2} - \frac{{t_0}^2}{2} +   \frac{{t_0}^{\alpha +1}}{(\alpha - 1)t^{\alpha -1} }  - \frac{{t_0}^2}{\alpha -1}       \right) 
\leq  h(t_0)+\frac{1}{\alpha -1} t_0  |\dot{h}(t_0)| +
\frac{1}{\alpha -1} \int_{t_0}^{t}  
{\tau}^2  \frac{1}{\tau}g(\tau)  d\tau .
\end{equation*}
Dividing by $t^2$, and letting $t \to + \infty$, we obtain thanks to Lemma \ref{basic-int}
\begin{equation}\label{wconv-20}
\limsup_{t \to +\infty} W(t) \leq \Phi (z).
\end{equation}
Since $W(t) \geq \Phi (x(t))$, we deduce that
$$
\limsup_{t \to +\infty} \Phi (x(t)) \leq \Phi (z).
$$
This being true for any $z\in \mathcal H$, we obtain
$$
\limsup_{t \to +\infty}  \Phi (x(t)) \leq \inf \Phi .
$$
The other inequality $\liminf \Phi (x(t)) \geq \inf \Phi $ being trivially satisfied, we finally obtain
$$
\lim_{t \to +\infty}  \Phi (x(t)) = \inf_{\mathcal H} \Phi .
$$
Returning to (\ref{wconv-20}), we also obtain
$$
\lim_{t \to +\infty}  \|\dot{x}(t)\| = 0 .
$$
\end{proof}

\begin{remark}
With $t_0 >0$, the condition
$\int_{t_0}^{+\infty} \frac{\epsilon (t)}{t} dt <+ \infty $
is satisfied by
$
\epsilon (t) = \frac{1} {t^{\gamma}}
$
for any $\gamma >0$, and by $
\epsilon (t) = \frac{1} {(\ln t)^{\gamma}}
$
for any $\gamma >1$.
The property
$
\int_{t_0}^{+\infty} \frac{\epsilon (t)}{t} dt = + \infty 
$
is satisfied  by
$
\epsilon (t) = \frac{1}{({\ln t})^{\gamma}},
$
 for $0 < \gamma \leq 1$.
\end{remark}

\subsection{Case $\int_{t_{0}}^{+\infty} t\epsilon (t)dt < +\infty$: fast minimization}

The  following fast minimization property, and the convergence of trajectories, is consistent with the results obtained in \cite{ACPR} for the perturbed version of $ \mbox{(AVD)}_{\alpha}$. The Tikhonov regularization term acts as a small perturbation which does not affect the convergence properties of $ \mbox{(AVD)}_{\alpha}$.
\begin{Theorem}\label{tikh-fast} 
Let $\Phi : \mathcal H \rightarrow \mathbb R$ be a convex continuously differentiable function 
such that  $\argmin \Phi$ is nonempty.
 Let $\epsilon : [t_0 , +\infty [
\to \mathbb R^+ $ be a $\mathcal C^1$ nonincreasing  function such that
$\int_{t_{0}}^{+\infty} t\epsilon (t)dt < +\infty$. 
Let $x(\cdot)$ be a  classical  global solution  of $ \mbox{(AVD)}_{\alpha, \epsilon} $.

\smallskip

i) Suppose $\alpha \geq 3$. Then the following fast convergence of the values holds true
$$
 \Phi (x(t))-\inf \Phi\leq\dfrac{C}{t^{2}}  .
$$
Moreover
$$
\int_{t_{0}}^{+\infty}t\epsilon(t)\Vert x(t)\Vert^{2}<+\infty .
$$
\indent ii) Suppose $\alpha > 3$. Then $x(t)$   converges weakly to an element of \  $\argmin \Phi$, as $t \to + \infty$. Moreover,
\begin{align}
& \int_{t_{0}}^{+\infty}t(\Phi (x(t))-\inf \Phi)dt<+\infty ,\label{energ-est-1}\\
& \int_{t_{0}}^{+\infty} t \|\dot{x}(t)\|^2dt<+\infty ,\label{energ-est-2}\\
 & \sup_{t \geq t_0} t  \|\dot{x}(t)\| < + \infty \label{energ-est-3}. 
\end{align}
\end{Theorem}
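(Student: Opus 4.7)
The plan is to adapt the Su--Boyd--Cand\`es Lyapunov function to the Tikhonov-perturbed dynamics by adding a cubic correction designed to absorb the cross-term produced by $\epsilon(t)x(t)$. Fix $x^{*}\in\argmin\Phi$, set $f(t)=\Phi(x(t))-\inf_{\mathcal H}\Phi\geq 0$, $h(t)=\tfrac12\|x(t)-x^{*}\|^{2}$, and for a parameter $\lambda\in[2,\alpha-1]$ to be chosen, introduce the anchor $v(t)=\lambda(x(t)-x^{*})+t\dot x(t)$ and the augmented energy
\begin{equation*}
\mathcal{E}(t):=t^{2}f(t)+\tfrac12\|v(t)\|^{2}+\xi\, h(t)+\tfrac{t^{2}\epsilon(t)}{2}\|x(t)\|^{2},\qquad \xi:=\lambda(\alpha-1-\lambda).
\end{equation*}

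Next I would compute $\dot{\mathcal{E}}$ term by term. Inserting $t\ddot x=-\alpha\dot x-t\nabla\Phi(x)-t\epsilon(t)x$ into $\dot v=(\lambda+1)\dot x+t\ddot x$ and expanding $\langle v,\dot v\rangle$, the convexity bound $\langle x-x^{*},\nabla\Phi(x)\rangle\geq f(t)$ makes the occurrences of $t^{2}\langle\nabla\Phi(x),\dot x\rangle$ telescope against $\tfrac{d}{dt}(t^{2}f)$; the choice $\xi=\lambda(\alpha-1-\lambda)$ kills the residual $\dot h$ coefficient; and using $\dot\epsilon\leq 0$, the derivative of $\tfrac{t^{2}\epsilon}{2}\|x\|^{2}$ contributes a piece $t^{2}\epsilon\langle x,\dot x\rangle$ that cancels exactly the dynamical part of $-t\epsilon\langle v,x\rangle$, leaving only the static expression $-\lambda t\epsilon(t)\langle x(t)-x^{*},x(t)\rangle$. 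A single Young inequality applied to $\lambda t\epsilon\langle x^{*},x\rangle$ with the balance $\beta=(\lambda-1)/\lambda$ yields the clean master estimate
\begin{equation*}
\dot{\mathcal{E}}(t)\leq -(\lambda-2)\,tf(t)-(\alpha-1-\lambda)\,t\|\dot x(t)\|^{2}-\tfrac{\lambda-1}{2}\,t\epsilon(t)\|x(t)\|^{2}+\tfrac{\lambda^{2}}{2(\lambda-1)}\,t\epsilon(t)\|x^{*}\|^{2}.
\end{equation*}
Since $\int_{t_{0}}^{+\infty}t\epsilon(t)dt<+\infty$, integrating on $[t_{0},T]$ makes $\mathcal{E}$ uniformly bounded and turns each dissipation summand into a finite integral.

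For part (i) with $\alpha=3$ I take $\lambda=2$, which annihilates the first two coefficients while keeping $-\tfrac12 t\epsilon(t)\|x(t)\|^{2}$ active: from $t^{2}f(t)\leq\mathcal{E}(t)\leq C$ I deduce $\Phi(x(t))-\inf\Phi\leq C/t^{2}$, and transferring the $t\epsilon\|x\|^{2}$ term to the left gives $\int t\epsilon(t)\|x(t)\|^{2}dt<+\infty$. For part (ii) with $\alpha>3$ I pick any $\lambda\in(2,\alpha-1)$, so all three coefficients are strictly positive, and the integrated inequality produces \eqref{energ-est-1} and \eqref{energ-est-2} at once, together with the bounds $\|v(t)\|^{2}\leq 2\mathcal{E}(t)$ and $h(t)\leq \mathcal{E}(t)/\xi$ (here $\xi>0$ because $\lambda<\alpha-1$); then $t\|\dot x(t)\|\leq\|v(t)\|+\lambda\|x(t)-x^{*}\|$ is bounded, which is \eqref{energ-est-3}. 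The weak convergence I would then obtain via Opial's lemma: any weak cluster point lies in $\argmin\Phi$ by weak lower semicontinuity of $\Phi$ and part (i), while the existence of $\lim_{t\to+\infty}\|x(t)-z\|$ for every $z\in\argmin\Phi$ follows from applying the standard Opial-type scalar lemma (as in the appendix of \cite{ACPR}) to the inequality $\ddot h_{z}(t)+\tfrac{\alpha}{t}\dot h_{z}(t)\leq \|\dot x(t)\|^{2}+\tfrac{\epsilon(t)}{2}\|z\|^{2}$ derived from Lemma \ref{L:Wh_z} after bounding $W(t)-\Phi(z)\geq 0$: its right-hand side is $t$-integrable by \eqref{energ-est-2} and the hypothesis on $\epsilon$.

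The main obstacle is the very design of $\mathcal{E}$. The naive Lyapunov $t^{2}f+\tfrac12\|v\|^{2}$ inherited from Su--Boyd--Cand\`es produces a parasitic term $-t\epsilon(t)\langle v(t),x(t)\rangle$ whose size can only be bounded by $\sqrt{\mathcal{E}(t)}\,\|x(t)\|$, and $\|x(t)\|$ is not a priori controlled in the absence of coercivity of $\Phi$, so a plain Gr\"onwall closure fails. The correction $\tfrac{t^{2}\epsilon}{2}\|x\|^{2}$ is calibrated precisely so that, upon differentiation, its $t^{2}\epsilon\langle x,\dot x\rangle$ contribution cancels the dynamical part of that parasitic term, reducing the remainder to a static quadratic form whose negative piece is absorbed by the newly created coefficient $-(\lambda-1)/2$ and whose residue is integrable thanks solely to $\int t\epsilon<+\infty$. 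The gap between $\alpha\geq 3$ in part (i) and $\alpha>3$ in part (ii) reflects exactly whether $\lambda$ can be chosen strictly inside $(2,\alpha-1)$, which is what is needed to make all three dissipation coefficients simultaneously positive.
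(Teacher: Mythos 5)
Your proof is correct, and its core is the same Lyapunov device as the paper's: both take the Su--Boyd--Cand\`es/\cite{ACPR} energy and absorb the Tikhonov term by working with $t^2\bigl[\Phi(x(t))+\tfrac{\epsilon(t)}{2}\|x(t)\|^2-\inf\Phi\bigr]$ (the paper writes this as $t^{2}[f_{t}(x(t))-\inf\Phi]$ with $f_{t}=\Phi+\tfrac{\epsilon(t)}{2}\|\cdot\|^{2}$, which is exactly your $t^{2}f(t)+\tfrac{t^{2}\epsilon(t)}{2}\|x(t)\|^{2}$). The genuine difference is your one-parameter family: the paper's $\mathcal{E}$ corresponds to the endpoint $\lambda=\alpha-1$, hence $\xi=0$ and no $h$-term, and it controls the $\epsilon$-cross-term via the strong convexity of $f_{t}$ at $z$ (producing $t\epsilon\|x-z\|^{2}$ on the left and $t\epsilon\|z\|^{2}$ on the right) rather than via your Young inequality; the net effect is the same master estimate. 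Where this matters is part (ii): with $\lambda=\alpha-1$ the coefficient of $t\|\dot x\|^{2}$ vanishes, so the paper cannot read \eqref{energ-est-2} and \eqref{energ-est-3} off the Lyapunov inequality and instead derives them by a separate argument (taking the scalar product of the equation with $t^{2}\dot x(t)$ and integrating, using \eqref{energ-est-1} and $\int t\epsilon(t)\|x(t)\|^{2}dt<+\infty$). Your choice $\lambda\in(2,\alpha-1)$ makes all three dissipation coefficients strictly positive and keeps $\xi h$ inside the energy, so \eqref{energ-est-1}, \eqref{energ-est-2}, and the boundedness of $v$ and $h$ (hence \eqref{energ-est-3}) all drop out of a single integration --- a cleaner route to the same estimates. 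The Opial step is identical to the paper's (Lemma \ref{L:Wh_z} combined with Lemma \ref{basic-edo}); note only that the constant there should be $\tfrac{3}{2}\|\dot x(t)\|^{2}$ rather than $\|\dot x(t)\|^{2}$, and that your choice $\lambda=2$ in part (i) in fact works for every $\alpha\geq 3$, not only $\alpha=3$ --- neither point affects the argument.
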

\begin{proof} \textit{i)} The proof is parallel to that of \cite{ACPR}.
Fix  $z\in\argmin \Phi$, and consider the energy function 
$$\mathcal{E}(t)=\dfrac{2}{\alpha -1}t^{2}\left[ f_{t}(x(t))-\inf \Phi\right] +(\alpha -1)
\Vert x(t)-z+\frac{t}{\alpha -1}\dot{x}(t)\Vert^{2},$$
where $f_t : \mathcal H \to \mathbb R$ is defined for any $x\in \mathcal H$ by 
\begin{equation}\label{Tikh-4b0}
 f_t (x) := \Phi(x) + \frac{\epsilon (t)}{2}\| x \|^2 .
\end{equation}
Let us observe  that
\begin{align*}
\nabla f_t (x(t)) &= \nabla\Phi(x(t)) + \epsilon (t) x(t)\\
&= -\ddot{x}(t) - \frac{\alpha}{t} \dot{x}(t) .
\end{align*}
By derivating $\mathcal{E}(\cdot)$, and using the above relation we obtain
\begin{align*}
\dot{\mathcal{E}}(t)&=\dfrac{4t}{\alpha -1}\left[ f_{t}(x(t))-\inf \Phi\right]+ \dfrac{2}{\alpha -1}t^{2}\left[ \langle \nabla f_{t}(x(t)),\dot{x}(t)\rangle +\dot{\epsilon}(t)\frac{\Vert x(t)\Vert^{2}}{2}\right]\vspace{2mm} \\
& \qquad \qquad \qquad \qquad \qquad \qquad +2t\langle x(t)-z +\frac{t}{\alpha -1}\dot{x}(t),\ddot{x}(t) + \frac{\alpha}{t} \dot{x}(t)\rangle\\
& = \dfrac{4t}{\alpha -1}\left[ f_{t}(x(t))-\inf \Phi\right]+ \dfrac{2}{\alpha -1}t^{2}\left[ \langle \nabla f_{t}(x(t)),\dot{x}(t)\rangle +\dot{\epsilon}(t)\frac{\Vert x(t)\Vert^{2}}{2}\right]\vspace{2mm} \\
&\qquad \qquad \qquad \qquad \qquad \qquad -2t\langle x(t)-z +\frac{t}{\alpha -1}\dot{x}(t),\nabla f_{t}(x(t))\rangle .
\end{align*}
After simplification, we obtain
\begin{equation}\label{Lip-E1}
\dot{\mathcal{E}}(t)=\dfrac{4t}{\alpha -1}\left[ f_{t}(x(t))-\inf \Phi\right]+\dfrac{t^{2}}{\alpha -1}\dot{\epsilon}(t)\Vert x(t)\Vert^{2}-2t\langle x(t) -z,\nabla f_{t}(x(t))\rangle .
\end{equation}
By the strong convexity of $f_t$
$$f_{t}(z)-f_{t}(x(t))\geq \langle\nabla f_{t}(x(t)),z-x(t)\rangle+\frac{\epsilon(t)}{2}\Vert x(t)-z\Vert^{2}.$$
Equivalently
\begin{equation}\label{Lip-E2}
 \langle\nabla f_{t}(x(t)), x(t)-z \rangle \geq f_{t}(x(t)) - \inf \Phi - \frac{\epsilon(t)}{2} \|z\|^2     +\frac{\epsilon(t)}{2}\Vert x(t)-z\Vert^{2}.
 \end{equation}
Combining (\ref{Lip-E1}) with (\ref{Lip-E2}) we obtain
$$\dot{\mathcal{E}}(t)+(2-\dfrac{4}{\alpha -1})t\left[ f_{t}(x(t))-\inf \Phi\right]-\dfrac{t^{2}}{\alpha -1}\dot{\epsilon}(t)\Vert x(t)\Vert^{2}+t\epsilon(t)\Vert x(t)-z\Vert^{2} \leq t\epsilon(t) \|z\|^2.$$
We exploit the expression of $f_{t}$ to write
\begin{equation}\label{Lip-E3}
\dot{\mathcal{E}}(t)+2(\dfrac{\alpha -3}{\alpha -1})t\left[ \Phi (x(t))-\inf \Phi\right]+\left[(\alpha-3)\epsilon(t)-t\dot{\epsilon}(t)\right]\dfrac{t\Vert x(t)\Vert^{2}}{\alpha-1}+t\epsilon(t)\Vert x(t)-z\Vert^{2}\leq t\epsilon(t) \|z\|^2 .
\end{equation}
Since  $\epsilon (\cdot)$ is a  nonincreasing, nonnegative  function, and $\alpha \geq 3$, we infer
$$\dot{\mathcal{E}}(t)\leq t\epsilon(t) \|z\|^2 .$$
From $\int_{t_{0}}^{+\infty} t\epsilon (t)dt < +\infty$
we deduce that
the positive part $[\dot{\mathcal{E}}]_+$ of $\dot{\mathcal{E}}$ belongs to $L^1(t_0,+\infty)$. Since $\mathcal{E}$ is bounded from below, 
it follows that $\mathcal{E}(t)$ has a limit as $t \to +\infty$, and hence is bounded, which gives the claim
$ \Phi (x(t))-\inf \Phi\leq\dfrac{C}{t^{2}}$.
Now integrating (\ref{Lip-E3}), we obtain 
\begin{equation}\label{Lip-E31}
\int_{t_{0}}^{+\infty}t\epsilon(t)\Vert x(t)-z\Vert^{2}  dt <+\infty .
\end{equation}
Combining the inequality
$$t\epsilon(t)\Vert x(t)\Vert^{2} \leq 2t\epsilon(t)\Vert x(t)-z\Vert^{2} + 2t\epsilon(t)\Vert z\Vert^{2}$$ 
with (\ref{Lip-E31}) and $\int_{t_{0}}^{+\infty} t\epsilon (t)dt < +\infty$, we  conclude that
\begin{equation}\label{Lip-E32}
\int_{t_{0}}^{+\infty}t\epsilon(t)\Vert x(t)\Vert^{2} dt <+\infty .
\end{equation}

\medskip

\textit{ii)} Let us now suppose $\alpha >3$.
By integration of (\ref{Lip-E3}) we first obtain
\begin{equation}\label{Lip-E4}
\int_{t_{0}}^{+\infty}t(\Phi (x(t))-\inf \Phi)dt<+\infty.
\end{equation}
In order to obtain further energy estimates, let us take the scalar product of  (\ref{edo01}) with 
$t^2 \dot{x}(t)$. We obtain
$$
\frac{t^2}{2} \frac{d}{dt}\|\dot{x}(t)\|^2
+ \alpha t \|\dot{x}(t)\|^2 + t^2 \frac{d}{dt} \Phi (x(t)) +  \frac{t^2}{2}\epsilon (t) \frac{d}{dt}\|x(t)\|^2  =0 .
$$
After integration on $[t_0, t]$, we obtain
\begin{align*}
\frac{t^2}{2} \|\dot{x}(t)\|^2 + (\alpha -1) \int_{t_0}^t s \|\dot{x}(s)\|^2 ds \ + & \  t^2 \left(  \Phi (x(t)) -\inf \Phi \right) -2 \int_{t_0}^t s \left(  \Phi (x(s)) -\inf \Phi \right) ds \\
&+ \frac{t^2}{2}\epsilon (t) \|x(t)\|^2 - \int_{t_0}^t  (s \epsilon (s) + \frac{s^2}{2}  \dot{\epsilon}(s)  ) \|x(s)\|^2 ds \leq C
\end{align*}
where $C$ is independent of $t$, and just depends on the initial data. By using  $\dot{\epsilon}(\cdot) \leq 0$, we deduce that 
$$
\frac{t^2}{2} \|\dot{x}(t)\|^2 + (\alpha -1) \int_{t_0}^t s \|\dot{x}(s)\|^2 ds  \\
  \leq C + 2 \int_{t_0}^{\infty} s\left(  \Phi (x(s)) -\inf \Phi \right) ds + \int_{t_0}^{\infty}  s \epsilon (s)  \|x(s)\|^2 ds.
$$
Using the previous estimates (\ref{Lip-E32}) and (\ref{Lip-E4}), and since $\alpha >1$, we deduce that
$$
\int_{t_{0}}^{+\infty}t  \|\dot{x}(t)\|^2 dt <  +\infty
$$
and
$$
\sup_{t \geq t_0} t  \|\dot{x}(t)\| < + \infty .
$$
We now have all the ingredients to prove the weak convergence of trajectories. From Lemma
\ref{L:Wh_z}, (\ref{E:ineq_h{_z}-b}), we have
\begin{equation}
t\ddot{h}_z(t) + \alpha \dot{h}_z(t)   \leq  g(t) ,
\end{equation}
with 
$$g(t)= \frac{3}{2}t\|\dot{x}(t)\|^2 + \frac{1}{2}t\epsilon (t)  \|z\|^2 .$$
 Using again the assumption  
$\int_{t_{0}}^{+\infty} t\epsilon (t)dt < +\infty$, and 
the energy estimate (\ref{energ-est-2})
$\int_{t_{0}}^{+\infty} t \|\dot{x}(t)\|^2dt<+\infty$ ,  it follows that $g\in L^1 (t_0, +\infty)$.
 Lemma \ref{basic-edo} now shows that 
 the positive part $[\dot h]_+$ of $\dot h$ belongs to $L^1(t_0,+\infty)$, and $\lim_{t\to+\infty}h(t)$ exists.
This is one of the two conditions of Opial's lemma \ref{Opial}.
The other condition is clearly satisfied: we know that $\Phi (x(t))$ tends to the infimal value of $\Phi$. By the lower semicontinuity of $\Phi$ for the weak topology,  every weak cluster point of of $x(\cdot)$ is a minimizer of $\Phi$.
\end{proof}

\begin{remark} The above convergence result is not  a consequence of the general perturbation Theorem of \cite{ACPR}, because we don't know a priori if the trajectory remains bounded. This is only obtained ultimately as a consequence of the convergence of the trajectory for the weak topology.
\end{remark}


\section{Slow vanishing case: $\int_{t_0}^{+\infty} \frac{\epsilon (t)}{t} dt = + \infty$} 

\smallskip

In this section,   $S=\argmin \Phi$ is supposed to be nonempty.
When $\epsilon (t)$ does not tend to zero too rapidly, a natural conjecture is that any orbit  of $ \mbox{(AVD)}_{\alpha, \epsilon} $ converges strongly to the element of $\argmin \Phi$ which has minimal norm.
 To analyze this delicate question, we first recall some classical facts concerning the Tikhonov approximation.
 For each $\epsilon >0$, we denote by $x_{\epsilon}$  the unique solution  of  the strongly convex minimization problem
$$
x_{\epsilon} = \argmin_{x \in \mathcal H} \left\lbrace  \Phi (x) + \frac{\epsilon}{2}\| x \|^2 \right\rbrace .
$$
Equivalently,
$$
\nabla  \Phi (x_{\epsilon}) + \epsilon x_{\epsilon} =0.
$$
Let us recall that  the Tikhonov approximation curve, $\epsilon \mapsto x_{\epsilon}$, satisfies the well-known strong convergence property:
$$
\lim _{\epsilon \to 0}x_{\epsilon} =   p , 
$$
where $p$ is the element of minimal norm of the closed convex nonempty set $\argmin \Phi$. This result was first  obtained by Tikhonov and Arsenin \cite{TA} in the case of ill-posed least square problems, and Browder \cite{Browder} for monotone variational inequalities, then extended and revisited by many authors, see for example \cite[Corollary 5.2]{Att2},  \cite[Theorem 23.44]{BC}, \cite{Reich}. Moreover, by the monotonicity property of $\nabla  \Phi $, and $\nabla  \Phi (p) =0$, $\nabla  \Phi (x_{\epsilon}) = - \epsilon x_{\epsilon}$, we have
$$
\langle x_{\epsilon} -   p ,  -\epsilon x_{\epsilon} \rangle  \geq 0
$$ 
which, after dividing by $\epsilon >0$, and by Cauchy-Schwarz inequality gives 
\begin{equation}\label{Tikh-0}
 \|  x_{\epsilon}  \| \leq \|p \| \quad \mbox{for all} \ \epsilon >0.
\end{equation}   
The following result gives a partial answer to the strong convergence property of trajectories of $\mbox{(AVD)}_{\alpha, \epsilon} $ to the solution with minimal norm.

\begin{Theorem} \label{Thm-strong-conv-Tikhonov}
Let $\Phi : \mathcal H \rightarrow \mathbb R$ be a convex continuously differentiable function 
such that  $\argmin \Phi$ is nonempty.
Suppose that $\alpha >1$. Let $\epsilon : [t_0 , +\infty [
\to \mathbb R^+ $ be a $\mathcal C^1$ decreasing function such that $\lim_{t \to +\infty} \epsilon (t) =0$, and  such that
\begin{equation}\label{Tikh-1}
\int_{t_0}^{+\infty} \frac{\epsilon (t)}{t} dt =+ \infty .
\end{equation}
Let $x(\cdot)$ be a  classical global solution  of {\rm(\ref{edo001})}. 
Then, the following ergodic convergence property is satisfied
\begin{equation}\label{Tikh-22}
\lim_{t \rightarrow + \infty} \frac{1}{\int_{t_0}^t  \frac{\epsilon (\tau)}{\tau} d\tau} \int_{t_0}^t  \frac{\epsilon (\tau)}{\tau} \|x(\tau) - p \|  d \tau =0 , 
\end{equation}
where $p$ is the element of minimal norm of $\argmin \Phi$.
Moreover, 
\begin{equation}\label{Tikh-2}
\liminf_{t \rightarrow + \infty} \|x(t) - p \| =0 .
\end{equation}
\end{Theorem}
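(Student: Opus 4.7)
My plan is to derive a differential inequality for $h_p(t) = \tfrac{1}{2}\|x(t) - p\|^2$ in which the Tikhonov curve $x_{\epsilon(t)}$ enters as a moving comparison point, and then extract the ergodic convergence by a weighted averaging argument. The crucial first move is to apply Lemma~\ref{L:Wh_z} with $z = p$ and sharpen the lower bound on $W(t) - \min\Phi$ via the $\epsilon(t)$-strong convexity of $f_t := \Phi + \tfrac{\epsilon(t)}{2}\|\cdot\|^2$ at its minimizer $x_{\epsilon(t)}$. Combining $f_t(x(t)) \geq f_t(x_{\epsilon(t)}) + \tfrac{\epsilon(t)}{2}\|x(t) - x_{\epsilon(t)}\|^2$ with $\Phi(x_{\epsilon(t)}) \geq \min\Phi$ yields
\begin{equation*}
W(t) - \min\Phi \;\geq\; \tfrac{1}{2}\|\dot x(t)\|^2 + \tfrac{\epsilon(t)}{2}\|x_{\epsilon(t)}\|^2 + \tfrac{\epsilon(t)}{2}\|x(t) - x_{\epsilon(t)}\|^2.
\end{equation*}
Inserting this into \eqref{E:ineq_h{_z}-ba} and using the identity $2\langle x, p\rangle - \|x\|^2 = \|p\|^2 - \|x - p\|^2$, I obtain the central differential inequality
\begin{equation*}
\ddot h_p(t) + \frac{\alpha}{t}\dot h_p(t) + \frac{\epsilon(t)}{2}\|x(t) - p\|^2 \;\leq\; \|\dot x(t)\|^2 + \frac{\epsilon(t)}{2}\bigl(\|p\|^2 - \|x_{\epsilon(t)}\|^2\bigr).
\end{equation*}
The essential structural gain is that the nonnegative residual $\|p\|^2 - \|x_{\epsilon(t)}\|^2$ on the right-hand side vanishes as $t \to +\infty$, thanks to the Tikhonov convergence $x_{\epsilon(t)} \to p$ reflected in~\eqref{Tikh-0}.

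Next I will divide this inequality by $t$ and integrate from $t_0$ to $T$. Two successive integrations by parts of $\ddot h_p/s + \alpha \dot h_p/s^2$ generate boundary contributions $\dot h_p(T)/T$, $(\alpha+1)h_p(T)/T^2$, and the nonnegative tail $2(\alpha+1)\int_{t_0}^T h_p(s)/s^3\,ds$. Because $W$ is nonincreasing (Lemma~\ref{L:W}), one has the a priori estimates $\|\dot x(t)\|^2 \leq 2(W(t_0) - \min\Phi)$ and $\epsilon(t)\|x(t)\|^2 \leq 2(W(t_0) - \min\Phi)$; since the slow-regime hypothesis $\int \epsilon(t)/t\,dt = +\infty$ forces $\epsilon(t)t^2 \to +\infty$, these bounds make $h_p(T)/T^2$ and $|\dot h_p(T)|/T$ tend to $0$, while the tail $\int_{t_0}^{+\infty} h_p(s)/s^3\,ds$ is finite. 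Setting $I(T) := \int_{t_0}^T \epsilon(s)/s\,ds \to +\infty$ and dividing the resulting integral inequality by $I(T)$, the boundary contributions and $\int_{t_0}^{+\infty}\|\dot x\|^2/s\,ds < +\infty$ from Lemma~\ref{L:W}(ii) each contribute $o(1)$. The averaged Tikhonov residual $\tfrac{1}{I(T)}\int_{t_0}^T \tfrac{\epsilon(s)}{s}(\|p\|^2 - \|x_{\epsilon(s)}\|^2)\,ds$ also tends to $0$ by a standard weighted Ces\`aro argument (pointwise factor vanishing, weight with divergent integral). The conclusion of this step is
\begin{equation*}
\lim_{T \to +\infty}\frac{1}{I(T)}\int_{t_0}^T \frac{\epsilon(s)}{s}\|x(s) - p\|^2\,ds \;=\; 0.
\end{equation*}

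To finish, I note that $\tfrac{\epsilon(s)/s}{I(T)}\,ds$ is a probability measure on $[t_0,T]$, so Jensen's inequality gives $\bigl(\tfrac{1}{I(T)}\int \tfrac{\epsilon(s)}{s}\|x(s)-p\|\,ds\bigr)^2 \leq \tfrac{1}{I(T)}\int \tfrac{\epsilon(s)}{s}\|x(s)-p\|^2\,ds \to 0$, which is precisely~\eqref{Tikh-22}. The $\liminf$ statement~\eqref{Tikh-2} then follows by contradiction: if $\|x(t) - p\|$ were bounded below by some $\ell > 0$ for $t \geq T_1$, the weighted mean would exceed $\ell \cdot (I(T) - I(T_1))/I(T) \to \ell$, contradicting~\eqref{Tikh-22}. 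The principal technical obstacle in this scheme is the integration step: carefully controlling the boundary terms $\dot h_p(T)/T$ and $h_p(T)/T^2$ produced by the double integration by parts, which is exactly where the qualitative consequence $\epsilon(t)t^2 \to +\infty$ of the slow-regime hypothesis is needed in order to exploit the elementary a priori bound $\epsilon(t)\|x(t)\|^2 = O(1)$.
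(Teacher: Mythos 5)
Your proposal is correct in substance and follows essentially the same route as the paper: the same anchor function $h_p(t)=\tfrac12\|x(t)-p\|^2$, the same use of the Tikhonov curve $x_{\epsilon(t)}$ as a moving comparison point together with \eqref{Tikh-0}, the same central differential inequality (your inequality is exactly \eqref{Tikh-12}, reached via Lemma \ref{L:Wh_z} and strong convexity of $f_t$ at its minimizer rather than, as in the paper, strong convexity of $f_t$ between $x(t)$ and $p$ combined with $f_t(x_{\epsilon(t)})\le f_t(x(t))$ --- an equivalent bookkeeping), the same division by $t$, integration, and weighted Ces\`aro step, and the same contradiction argument for the $\liminf$. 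Your explicit Jensen step passing from the averaged $\|x-p\|^2$ to the averaged $\|x-p\|$ makes precise a point the paper leaves implicit. One claim you make is false, though fortunately inessential: the hypothesis $\int_{t_0}^{+\infty}\epsilon(t)/t\,dt=+\infty$ does \emph{not} force $\epsilon(t)t^2\to+\infty$ for a nonincreasing $\epsilon$; take $\epsilon\equiv 1/T_n^2$ on $[T_n,T_{n+1})$ with $\ln(T_{n+1}/T_n)=T_n^2$ (suitably smoothed), so that each block contributes $1$ to the integral while $\epsilon(T_n)T_n^2=1$ for all $n$. You do not need this: after integrating, the boundary terms only require one-sided bounds, not decay. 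Indeed $(\alpha+1)h_p(T)/T^2$ and the tail $\int h_p(s)/s^3\,ds$ enter with the favorable sign and can simply be dropped, while $|\dot h_p(T)|/T\le\|x(T)-p\|\,\|\dot x(T)\|/T=O(1)$ because $W$ nonincreasing (Lemma \ref{L:W}) bounds $\|\dot x\|$ and hence gives $\|x(T)-p\|=O(T)$. With that repair the integrated bound \eqref{Tikh-15} holds and the rest of your argument goes through unchanged.
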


\begin{proof}
Our proof is an adaptation to our situation of the argument developed by Cominetti-Peypouquet-Sorin in \cite{CPS}. Let $p := \mbox{proj}_{\argmin \Phi} 0$ \ be the unique element of minimal norm of the closed convex nonempty set $\argmin \Phi$, and set 
\begin{equation}\label{Tikh-3}
 h(t) = \frac{1}{2}\| x(t) - p\|^2 .
\end{equation}
By a similar computation as in Theorem \ref{Thm-weak-conv2}
\begin{equation}\label{Tikh-4}
 \ddot{h}(t) + \frac{\alpha}{t} \dot{h}(t) =  \| \dot{x}(t) \|^2 + \langle x(t) - p , \ddot{x}(t) + \frac{\alpha}{t} \dot{x}(t)  \rangle .
\end{equation}
We use the function $f_t$ introduced in (\ref{Tikh-4b0})
\begin{equation*}
x \mapsto f_t (x) := \Phi(x) + \frac{\epsilon (t)}{2}\| x \|^2,
\end{equation*}
 and observe  that
\begin{align*}
\nabla f_t (x(t)) &= \nabla\Phi(x(t)) + \epsilon (t) x(t)\\
&= -\ddot{x}(t) - \frac{\alpha}{t} \dot{x}(t) .
\end{align*}
By the strong convexity property of $f_t$, and the above relation, we infer
\begin{align*}
 f_t (p) &\geq   f_t (x(t)) + \langle \nabla f_t (x(t)) , p- x(t)  \rangle  + \frac{\epsilon (t)}{2}\| p- x(t) \|^2 \\
 & \geq f_t (x(t)) + \langle -\ddot{x}(t) - \frac{\alpha}{t} \dot{x}(t) , p- x(t)  \rangle  + \frac{\epsilon (t)}{2}\| p- x(t) \|^2 .
\end{align*}
Equivalently
\begin{equation}\label{Tikh-7}
\langle x(t) - p , \ddot{x}(t) + \frac{\alpha}{t} \dot{x}(t)  \rangle + \epsilon (t) h(t) \leq  f_t (p) -   f_t (x(t)) .
\end{equation}
By definition of $ x_{\epsilon}$, we have 
\begin{align}\label{Tikh-8}
f_t (x_{\epsilon(t)}) & = \Phi(x_{\epsilon(t)}) + \frac{\epsilon(t)}{2}\| x_{\epsilon(t)} \|^2 \\
& \leq \Phi(x(t)) + \frac{\epsilon(t)}{2} \| x(t)\|^2 = f_t (x(t)) .
\end{align}
Combining (\ref{Tikh-7}) and (\ref{Tikh-8}) we obtain 
\begin{equation}\label{Tikh-9}
\langle x(t) - p , \ddot{x}(t) + \frac{\alpha}{t} \dot{x}(t)  \rangle + \epsilon (t) h(t) \leq  f_t (p) -   f_t (x_{\epsilon(t)}).
\end{equation}
Since $\Phi (p) \leq \Phi (x_{\epsilon(t)})$, we have
\begin{align}\label{Tikh-10}
f_t (p) -   f_t (x_{\epsilon(t)})&= \Phi(p) + \frac{\epsilon (t)}{2}\| p \|^2 - \Phi(x_{\epsilon(t)}) - \frac{\epsilon (t)}{2}\| x_{\epsilon(t)} \|^2 \\
&\leq \frac{\epsilon (t)}{2} ( \| p \|^2 - \| x_{\epsilon(t)} \|^2   ).
\end{align}
Combining (\ref{Tikh-9}) and (\ref{Tikh-10}), we get
\begin{equation}\label{Tikh-11}
\langle x(t) - p , \ddot{x}(t) + \frac{\alpha}{t} \dot{x}(t)  \rangle + \epsilon (t) h(t) \leq  \frac{\epsilon (t)}{2} ( \| p \|^2 - \| x_{\epsilon(t)} \|^2   ).
\end{equation}
Returning to (\ref{Tikh-4}) we obtain
 \begin{equation}\label{Tikh-12}
 \ddot{h}(t) + \frac{\alpha}{t} \dot{h}(t) + \epsilon (t) h(t) \leq  \| \dot{x}(t) \|^2  + \frac{\epsilon (t)}{2} ( \| p \|^2 - \| x_{\epsilon(t)} \|^2   ).
\end{equation}
Equivalently
 \begin{equation}\label{Tikh-13}
  \epsilon (t) h(t) \leq  \| \dot{x}(t) \|^2  + \frac{\epsilon (t)}{2} ( \| p \|^2 - \| x_{\epsilon(t)} \|^2   ) - \frac{1}{t^{\alpha}} \frac{d}{dt}(t^{\alpha}\dot{h} (t)  ).
\end{equation}
After dividing by $t$
 \begin{equation}\label{Tikh-14}
 \frac{ \epsilon (t)}{t} h(t) \leq   \frac{1}{t} \| \dot{x}(t) \|^2  + \frac{\epsilon (t)}{2t} ( \| p \|^2 - \| x_{\epsilon(t)} \|^2   ) - \frac{1}{t^{\alpha +1}} \frac{d}{dt}(t^{\alpha}\dot{h} (t)  ).
\end{equation}
Set
$$
\delta (t) :=  \frac{1}{2}(\| p \|^2 - \| x_{\epsilon(t)} \|^2 ) ,
$$
which by (\ref{Tikh-0}) is nonnegative,  and by the strong convergence property of the Tikhonov approximation satisfies
$$
\lim_{t\to +\infty} \delta (t) =0.
$$
Let us integrate (\ref{Tikh-14}) on $[t_0, t]$. We obtain the existence of some positive constant $C$ such that for all $t\geq t_0$
\begin{equation}\label{Tikh-15}
\int_{t_0}^t  \frac{\epsilon (\tau)}{\tau} ( h(\tau) -    \delta (\tau) ) d \tau \leq C .  
\end{equation}
From $\int_{t_0}^{+\infty} \frac{\epsilon (t)}{t} dt = + \infty$, we deduce that 
$$
\liminf_{t \rightarrow + \infty} \ ( h(t) -    \delta (t) ) \leq 0
$$
and since $\lim_{t\to +\infty} \delta (t) =0$, we  obtain
$$
\liminf_{t \rightarrow + \infty} \  h(t) =0 .
$$
Let us now prove  strong ergodic convergence to the solution with minimal norm. We start from
\begin{align}\label{Tikh-16}
\int_{t_0}^t  \frac{\epsilon (\tau)}{\tau} h(\tau)  d \tau \ &= \int_{t_0}^t  \frac{\epsilon (\tau)}{\tau} ( h(\tau) -    \delta (\tau) ) d \tau + \int_{t_0}^t  \frac{\epsilon (\tau)}{\tau} \delta(\tau)  d \tau \\
&\leq C + \int_{t_0}^t  \frac{\epsilon (\tau)}{\tau} \delta(\tau)  d \tau,  \nonumber 
\end{align}
where the second inequality comes from (\ref{Tikh-15}).
After dividing (\ref{Tikh-16}) by $\int_{t_0}^t  \frac{\epsilon (\tau)}{\tau} d\tau$,  using that 
$\lim_{t\to +\infty} \int_{t_0}^t  \frac{\epsilon (\tau)}{\tau}  =+\infty$, and
$\lim_{t\to +\infty} \delta (t) =0$,  we obtain
\begin{align}\label{Tikh-17}
\limsup_{t \rightarrow + \infty} \frac{1}{\int_{t_0}^t  \frac{\epsilon (\tau)}{\tau} d\tau} \int_{t_0}^t  \frac{\epsilon (\tau)}{\tau} \|x(\tau) - p \|  d \tau \ \leq 0.  
\end{align}
Since $h$ is nonnegative, this gives  the ergodic convergence result 
\begin{equation*}
\lim_{t \rightarrow + \infty} \frac{1}{\int_{t_0}^t  \frac{\epsilon (\tau)}{\tau} d\tau} \int_{t_0}^t  \frac{\epsilon (\tau)}{\tau} \|x(\tau) - p \|  d \tau =0 . 
\end{equation*}
By using Jensen's inequality, we deduce that
\begin{equation*}
\lim_{t \rightarrow + \infty} \frac{1}{\int_{t_0}^t  \frac{\epsilon (\tau)}{\tau} d\tau} \int_{t_0}^t  \frac{\epsilon (\tau)}{\tau} x(\tau)  d \tau =p . 
\end{equation*}

\end{proof}
\section{An illustrative example}

 Let us examine a simple situation where we are able to compute explicitely  the  trajectories of  $\mbox{(AVD)}_{\alpha, \epsilon}$, and hence analyze their  asymptotic behavior.
We  use a symbolic differential computation software to determine explicit solutions for $\mbox{(AVD)}_{\alpha, \epsilon}$ in terms of classical functions, and Bessel functions of first and second type. We used WolframAlpha  Computational Knowledge Engine, available at {\tt http://www.wolframalpha.com.}
 
\medskip
 
 Let $\Phi:\R\to\R$ be the function which is identically zero, i.e., $\Phi(x)=0$ for all $x\in \mathbb R$. 
The $\mbox{(AVD)}_{\alpha} $ system (without Tikhonov regularization term) writes
\begin{equation}\label{edo001-example}
 \mbox{(AVD)}_{\alpha} \quad \quad \ddot{x}(t) + \frac{\alpha}{t} \dot{x}(t)  =0.
\end{equation} 
An elementary computation shows that, for any $\alpha >1$, each trajectory of (\ref{edo001-example}) converges. Its limit is equal to $x(t_0) + \frac{t_0}{\alpha -1}\dot{x}(t_0)$, which depends on the the initial data.
 
Let us now examine the convergence properties of the trajectories of the corresponding $\mbox{(AVD)}_{\alpha, \epsilon}$ system,
\begin{equation}\label{edo01-example}
 \mbox{(AVD)}_{\alpha, \epsilon} \quad \quad \ddot{x}(t) + \frac{\alpha}{t} \dot{x}(t) + \epsilon(t) x(t) =0,
\end{equation}
 which includes a Tikhonov regularization term.
Note that the set of minimizers of $\Phi$ is the whole real line, whose  minimum norm element is precisely zero. Since the convergence of  values is trivially satisfied in this case, the only relevant question is to examine  the convergence of  trajectories toward zero. In all the following examples we take as Cauchy data $x(1)= 1$ and $\dot{x}(1)=0$.

\medskip

1. \textit{Case $\epsilon (t) = \frac{1}{1+ \ln t}$.}
 The system writes
\begin{equation}\label{algo7bba}
\left\{
\begin{array}{rcl}
 & \ \ddot{x}(t) + \displaystyle{\frac{\alpha}{t}} \dot{x}(t) + \displaystyle{\frac{1}{1 + \ln t}}x(t) = 0. \\
\rule{0pt}{15pt}
 & x(1)=1; \ \dot{x}(1)=0. 
 \end{array}\right.
\end{equation}

This system  falls within the scope of the "\textit{slow vanishing case}" $\int_{t_0}^{+\infty} \frac{\epsilon (t)}{t} dt =+ \infty$. In contrast to the cases examined later, we are not able to obtain an explicit form of the solution. We can only compute the solution by approximate numerical methods. The following table summarizes the results, it shows the convergence to the minimal norm solution, namely the zero element, and enlights the role played by the value of the coefficient $\alpha$. We will confirm this result in the following cases, where explicit solutions can be calculated,  and discuss it at the end of the section.

 \begin{equation*}
\begin{array}{|c||c||c||c||c|cc|ccr|} 
\hline 
\alpha & 1 & 2 & 3 &  4 \\
\hline 
x(10) & 0.319 & 0.038  & 0.04& -0. 06 \\ 
\hline
x(100) & -0.138 & -0.008& 0.001 & 6 \times  10^{-4}  \\
\hline
x(1000) & 0.048 & 0.002& 6 \times 10^{-5}& -2.7 \times 10^{-6}\\
\hline
\end{array}
\end{equation*}

\medskip

2. \textit{Case $\epsilon (t) = \frac{1}{t}$.}
 The system writes
\begin{equation}\label{algo7bb}
\left\{
\begin{array}{rcl}
 & \ \ddot{x}(t) + \displaystyle{\frac{\alpha}{t}} \dot{x}(t) + \displaystyle{\frac{1}{t}}x(t) = 0. \\
\rule{0pt}{15pt}
 & x(1)=1; \ \dot{x}(1)=0. 
 \end{array}\right.
\end{equation}

Despite the fact that this system does not fall within the scope of the "\textit{slow vanishing case}" $\int_{t_0}^{+\infty} \frac{\epsilon (t)}{t} dt =+ \infty$, it is not too far from it, and we will see that it has quite similar convergence properties. The following table summarizes the results and shows the role played by the value of the coefficient $\alpha$. 
The results are expressed in terms of $J_\gamma$ and $Y_\gamma$ which are the Bessel functions of the first and the second kind, respectively, with parameter $\gamma$. We use that $|J_\gamma(t)|=\mathcal O(t^{-\frac{1}{2}})$ and $|Y_\gamma(t)|=\mathcal O(t^{-\frac{1}{2}})$ (see \cite[Section 5.11]{Lebedev}).
\begin{itemize}
\item 
 For  $\alpha =1$ we get
$$
x(t)=\frac{J_1 (2)Y_0 (2 \sqrt{t}) - Y_1 (2) J_0 (2 \sqrt{t}) }{J_1 (2)Y_0 (2)  - J_0 (2)Y_1 (2) }.
$$
which gives 
$$|x(t)|=\mathcal O(\frac{1}{t^{\frac{1}{4}}}).$$ 

\item For  $\alpha =2$ we get
$$
x(t)=\frac{\left( Y_0 (2)-Y_1 (2) -Y_2 (2)\right) J_1 (2 \sqrt{t}) +\left( - J_0 (2) + J_1 (2) + J_2 (2)        \right) Y_1 (2 \sqrt{t})  }{\sqrt{t}  \left[ (J_2 (2)       - J_0 (2)) Y_1 (2) +   J_1 (2) ( Y_0 (2) - Y_2 (2))    
\right]  }
$$
which gives 
$$|x(t)|=\mathcal O(\frac{1}{t^{\frac{3}{4}}}).$$ 

\item 
 For  $\alpha =3$ we get
$$
x(t)=\frac{\left( Y_1 (2) -2 Y_2 (2) -Y_3 (2)                         \right) J_2 (2 \sqrt{t}) + \left(-J_1 (2) +2 J_2 (2) + J_3 (2) \right) Y_2 (2 \sqrt{t})}{t (J_3 (2) - J_1 (2)) Y_2 (2) + J_2 (2) ( Y_1 (2) - Y_3 (2)) }
$$
which gives 
$$|x(t)|=\mathcal O(\frac{1}{t^{\frac{5}{4}}}).$$ 
We observe that in each case the trajectory converges to zero, the minimal norm solution. The following table summarizes the rate of convergence of the trajectories to zero.
\end{itemize}
 \begin{equation*}
\begin{array}{|c||c||c||c||c|cc|ccr|} 
\hline 
\alpha & 1 & 2 & 3 &  4 \\
\hline
|x(t)| & \mathcal O(\frac{1}{t^{\frac{1}{4}}}) & \mathcal O(\frac{1}{t^{\frac{3}{4}}})& \mathcal O(\frac{1}{t^{\frac{5}{4}}})  &  \mathcal O (\frac{1}{t^{\frac{7}{4}}})   \\
\hline
\end{array}
\end{equation*}

\bigskip

This suggests that these results obey a simple rule. Indeed, one can show that for any $\alpha >0$
$$
|x(t)| = \mathcal O(\frac{1}{t^{\frac{2 \alpha -1}{4}}}),
$$
which is in accordance with the above table.

\medskip

A similar calculation  can be made with 
 $\epsilon (t)= \frac{1}{t^{\frac{1}{p}}}$    , $p\in \mathbb N*$. For example, with $\epsilon (t) = \frac{1}{\sqrt{t}}$ and $\alpha =4$, we obtain
$$|x(t)|=\mathcal O(\frac{1}{t^{\frac{15}{8}}}),$$ 
which is a faster convergence property  than in the previous case.
\bigskip

3. \textit{Case $\epsilon (t) = \frac{1}{t^2}$.}
 The system writes
\begin{equation}\label{algo7b}
\left\{
\begin{array}{rcl}
 & \ddot{x}(t) + \displaystyle{\frac{\alpha}{t}} \dot{x}(t) + \displaystyle{\frac{1}{t^2}}x(t) = 0. \\
\rule{0pt}{20pt}
 &x(1)=1; \ \dot{x}(1)=0. 
 \end{array}\right.
\end{equation}
As in the previous case, this example is outside  situations that were discussed in the paper.
The following table summarizes the results and shows the role played by the value of the coefficient $\alpha$. 

 \begin{equation*}
\begin{array}{|c||c||c||c||c|cc|ccr|} 
\hline 
\alpha & 1 & 2 & 3 &  4 \\
\hline 
x(t) & \cos (\ln t) & \frac{1}{\sqrt{x}}\left(\sqrt{3}\sin\left(\frac{1}{2} \sqrt{3} \ln t  \right) +3 \cos  \left(\frac{1}{2} \sqrt{3} \ln t  \right)\right)  & \frac{\ln t +1}{t} & \frac{1}{10}t^{\frac{1}{2}(-3 - \sqrt{5})}\left( (5+ 3 \sqrt{5})t^{\sqrt{5}} -3\sqrt{5} +5\right) \\ 
\hline
|x(t)| & not \ conv. & \mathcal O (\frac{1}{\sqrt{t}} )& \mathcal O \left( \frac{\ln t}{t}\right)  & \mathcal O \left( \frac{1}{t^{\frac{3-\sqrt{5}}{2}}} \right)   \\
\hline
\end{array}
\end{equation*}

\bigskip

4. \textit{Case $\epsilon (t) = \frac{1}{t^3}$.}
 The system writes
\begin{equation}\label{algo7bc}
\left\{
\begin{array}{rcl}
 & \ddot{x}(t) + \displaystyle{\frac{\alpha}{t}} \dot{x}(t) + \displaystyle{\frac{1}{t^3}}x(t) = 0. \\
\rule{0pt}{20pt}
 &x(1)=1; \ \dot{x}(1)=0. 
 \end{array}\right.
\end{equation}
This example falls within the scope of the 
"\textit{fast vanishing case}"  $\int_{t_0}^{+\infty} t \epsilon (t) dt < + \infty$. For $\alpha >3$, we know from Theorem \ref{tikh-fast} that the solution trajectory of 
(\ref{algo7bc}) converges to a minimizer of $\Phi$, which here can be any real number.
The following table summarizes the results. It confirms that the limit exists, but
    is different from the minimum norm solution, i.e., there is no asymptotic effect of  the Tikhonov regularizating term. The results were obtained using the following explicit form of the solution of 
(\ref{algo7bc})
$$
x(t) = C\left( \frac{1}{t}\right)^\frac{3}{2} \left( \left(Y_2(2) + 3Y_3 (2) -Y_4 (2)  \right)J_3 \left( 2 \sqrt{\frac{1}{t}}\right)  
+ \left(-J_2(2) - 3J_3 (2) +J_4 (2)  \right)Y_3 \left( 2 \sqrt{\frac{1}{t}}\right)\right) 
$$
$$
C= \left( \left( J_4 (2) -J_2 (2) \right) Y_3 (2) +         J_3 (2) \left( Y_2(2) - Y_4 (2)     \right)            \right) ^{-1}.
$$
 \begin{equation*}
\begin{array}{|c||c||c||c||c|cc|ccr|} 
\hline 
t & 10 & 100 & 1000 &  10000 \\
\hline 
x(t) & 0.74257 & 0.709214 & 0.70602 & 0.705703 \\ 
\hline
\end{array}
\end{equation*}

\bigskip

\textit{Comments:} \
The  results obtained in the case case $\epsilon (t) = \frac{1}{1+ \ln t}$ and  $\epsilon (t) = \frac{1}{t}$ are conform to \cite{ACPR} where, for   $\mbox{(AVD)}_{\alpha} $ system, it is shown that for a strongly convex potential function $\Phi$, the rate of convergence to the unique minimizer can be made arbitrarily fast with $\alpha$ large. In the cases above, we are asymptotically close to this situation, which may explain that a similar phenomenom occurs. This is an interesting question to be addressed for future research. 

However, in the case $\epsilon (t) = \frac{1}{t^2}$, and  more in the case $\epsilon (t) = \frac{1}{t^3}$, the asymptotic effect of the Tikhonov regularizing term is less effective. We are far from the critical case that ensures convergence to the minimum norm solution.

Another natural question concerns the comparison with the results of \cite{AttCza1} concerning the Tikhonov regularization of the heavy ball with friction method
$\mbox{(HBF)}_{\epsilon}$. In this case, the damping coefficient is a fixed positive number, and the trajectories converge to the minimum norm solution under the sole assumption $\int _{t_0}^{+\infty} \epsilon (t) dt = + \infty$.
Let us give an example where we compare the solutions of the two systems. A systematic study of this question is an interesting subject for further research.

\begin{equation}\label{algo7bd}
\mbox{(AVD)}_{\alpha, \epsilon} \left\{
\begin{array}{rcl}
 & \ddot{x}(t) + \displaystyle{\frac{3}{t}} \dot{x}(t) + \displaystyle{\frac{1}{t}}x(t) = 0. \\
\rule{0pt}{20pt}
 &x(1)=1; \ \dot{x}(1)=0. 
 \end{array}\right.
\end{equation}
and 
\begin{equation}\label{algo7be}
\mbox{(HBF)}_{\epsilon} \left\{
\begin{array}{rcl}
 & \ddot{y}(t) + 3 \dot{y}(t) + \displaystyle{\frac{1}{t}}y(t) = 0. \\
\rule{0pt}{20pt}
 &y(1)=1; \ \dot{y}(1)=0. 
 \end{array}\right.
\end{equation}

\bigskip

 \begin{equation*}
\begin{array}{|c||c||c||c||c|cc|ccr|} 
\hline 
t & 10 & 20 & 50 &  100 \\
\hline 
x(t) & -0.098 & 0.018 &- 0.010 & 0.006 \\ 
\hline
y(t) & 0.455 & 0.358 & 0.263 & 0.208\\
\hline
\end{array}
\end{equation*}

\bigskip

In this example,  we can see that  $\mbox{(AVD)}_{\alpha, \epsilon}$  
 outperforms the Tikhonov regularization of the heavy ball method. Indeed, a too large damping coefficient makes the latter system similar to the steepest descent method, and hence makes it relatively slow.
Specifically, the idea behind the $\mbox{(AVD)}_{\alpha, \epsilon}$  and the accelerated gradient method of Nesterov is to take a damping coefficient not too big,   to enhance the inertial effect.
This is a good strategy for the  values of $t$ that are not too large. Combining with a restart method provides effective numerical  method, that would be interesting to study for the $\mbox{(AVD)}_{\alpha, \epsilon}$ system.

\section{Conclusions}

Within the framework  of convex optimization, we  presented a second-order differential system $\mbox{(AVD)}_{\alpha, \epsilon}$  whose asymptotic behavior combines two distinct effects:

\begin{enumerate}
\item The asymptotically vanishing viscosity coefficient $\frac{\alpha}{t}$ corresponds to a continuous version of the  accelerated gradient method of Nesterov. It is associated with a rapid minimization property, $\Phi(x(t))-  \min_{\mathcal H}\Phi \leq \frac{C}{t^2}$. 

\smallskip

\item The Tikhonov regularization with asymptotically vanishing  coefficient $\epsilon (t)$ gives an asymptotic hierarchical minimization. In our context, it tends to make the trajectories of $\mbox{(AVD)}_{\alpha, \epsilon}$  converge strongly to  the minimizer of minimum norm.

\end{enumerate}

These two properties are important in optimization, which justifies our interest to combine them into a single dynamic system.
However, our analysis shows that they are in some way antagonistic. We obtained the above properties by requiring  the Tikhonov parametrization $t \mapsto \epsilon (t)$ to verify the following asymptotic behavior:

\begin{itemize}
 \item 
 Property (1) holds true when  $t \mapsto \epsilon (t)$ satisfies
$\int_{t_{0}}^{+\infty} t\epsilon (t)dt < +\infty$, which reflects a "fast vanishing" property of $\epsilon (t)$ as $t \to + \infty$.

 \item 
Property (2) holds true when  $t \mapsto \epsilon (t)$ satisfies
$\int_{t_0}^{+\infty} \frac{\epsilon (t)}{t} dt = + \infty$, which reflects a "slow vanishing" property of $\epsilon (t)$ as $t \to + \infty$.

\end{itemize} 

It is an open question  whether one can simultaneously obtain both above asymptotic properties in a simple dynamic system:  fast minimization, and strong convergence to the solution with minimum norm.
One may think using another  type of damped inertial dynamic system, for example involving a geometric damping as in \cite{aabr}, see also \cite{JM}. 
The numerical experiments also suggest that, in accordance with \cite{ACPR}, taking $\alpha$ large may improve the rate of convergence to the solution with minimum norm. Restarting may be also an efficient strategy, see \cite{Candes}, \cite{SBC}.

Our study of  $\mbox{(AVD)}_{\alpha, \epsilon}$ offers a new perspective on this subject and its applications. The study of the  algorithmic version of $\mbox{(AVD)}_{\alpha, \epsilon}$ is an interesting subject for further research.

\section{Appendix: Some auxiliary results}

In this section, we present some auxiliary lemmas that are used in the paper. These results can be found in \cite{ACPR}. We reproduce them here for the convenience of the reader.

To establish the weak convergence of the solutions of \eqref{edo01}, we will use Opial's Lemma \cite{Op}, that we recall in its continuous form. This argument was first used in \cite{Bruck} to establish the convergence of nonlinear contraction semigroups.

\begin{lemma}\label{Opial} Let $S$ be a nonempty subset of $\mathcal H$, and let $x:[0,+\infty[ \to \mathcal H$. Assume that 
\begin{itemize}
\item [(i)] for every $z\in S$, $\lim_{t\to\infty}\|x(t)-z\|$ exists;
\item [(ii)] every  sequential weak cluster point of $x(t)$, as $t\to\infty$, belongs to $S$.
\end{itemize}
Then $x(t)$ converges weakly as $t\to\infty$ to a point in $S$.
\end{lemma}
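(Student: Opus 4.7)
The plan is to follow the classical pattern of Opial's original argument. First I would use hypothesis (i) to extract boundedness: since $S$ is nonempty, fix any $z \in S$; then $t \mapsto \|x(t) - z\|$ has a finite limit, hence is bounded on $[0,+\infty[$, so $x(t)$ stays in a bounded subset of $\mathcal H$. In a Hilbert space, bounded sets are sequentially weakly relatively compact, so from any sequence $t_n \to +\infty$ one can extract a weakly convergent subsequence. By hypothesis (ii), its weak limit lies in $S$, so the set of sequential weak cluster points of $x(\cdot)$ is a nonempty subset of $S$.

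The decisive step is to show that this set of cluster points is a singleton. Suppose $x_1, x_2$ are two weak cluster points, reached along sequences $t_n^{(1)}, t_n^{(2)} \to +\infty$; both lie in $S$ by (ii). Applying (i) with $z=x_1$ and $z=x_2$, the two limits $\ell_i := \lim_{t\to\infty}\|x(t)-x_i\|^2$ exist. The polarization identity
$$\|x(t)-x_1\|^2 - \|x(t)-x_2\|^2 = 2\langle x(t),\, x_2 - x_1\rangle + \|x_1\|^2 - \|x_2\|^2$$
then shows that $\langle x(t), x_2 - x_1\rangle$ has a finite limit $L$ as $t \to +\infty$. Passing to the limit along the two subsequences gives
$$\langle x_1, x_2 - x_1\rangle = L = \langle x_2, x_2 - x_1\rangle,$$
so $\|x_1 - x_2\|^2 = 0$ and $x_1 = x_2$. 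Denote this unique weak cluster point by $x_\infty \in S$.

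Finally I would promote uniqueness of cluster points to weak convergence of the whole trajectory by contradiction. If $x(t)$ did not converge weakly to $x_\infty$, there would exist $\varphi \in \mathcal H$, some $\eta > 0$, and a sequence $s_n \to +\infty$ with $|\langle x(s_n) - x_\infty, \varphi\rangle| \geq \eta$. By boundedness, $x(s_n)$ admits a weakly convergent subsequence, whose limit must again be a weak cluster point of $x(\cdot)$, hence equal to $x_\infty$ by uniqueness; this contradicts the lower bound $\eta$. I do not expect a real obstacle in this proof: the one point to state cleanly is sequential weak compactness of bounded sets in Hilbert space (Banach--Alaoglu together with separability of the closed linear span of the trajectory, or simply Eberlein--\v{S}mulian), which is standard.
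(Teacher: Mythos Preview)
Your argument is the classical Opial proof and is correct in every step: boundedness from (i), existence of weak cluster points via sequential weak compactness, uniqueness of the cluster point via the polarization identity combined with (i) applied at the two candidate limits, and finally the standard sub-subsequence argument to upgrade uniqueness to full weak convergence.

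As for comparison: the paper does not actually prove this lemma. It merely states Opial's Lemma in the appendix, attributes it to \cite{Op}, and remarks that the argument was first used in \cite{Bruck}; no proof is given. So there is nothing to compare against beyond noting that your write-up is precisely the standard proof one finds in the cited sources.
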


The following allows us to establish the existence of a limit for a real-valued function, as $t\to+\infty$:

\begin{lemma}\label{basic-edo} 
Let $\delta >0$, and let $w: [\delta, +\infty[ \rightarrow \mathbb R$ be a continuously differentiable function which is bounded from below. Assume
\begin{equation}\label{basic-edo1}
t\ddot{w}(t) + \alpha \dot w(t) \leq g(t),
\end{equation}
for some $\alpha > 1$, almost every $t>\delta$, and some nonnegative function $g\in L^1 (\delta, +\infty)$. Then, the positive part $[\dot w]_+$ of $\dot w$ belongs to $L^1(t_0,+\infty)$, and $\lim_{t\to+\infty}w(t)$ exists.
\end{lemma}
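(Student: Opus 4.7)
The plan is to convert the differential inequality into an integrated upper bound for $\dot w(t)$, show that the positive part of this upper bound is integrable on $[\delta,+\infty)$, and then use a standard decomposition trick to deduce the existence of the limit.

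First I would observe that multiplying the hypothesis $t\ddot w(t)+\alpha \dot w(t)\le g(t)$ by $t^{\alpha-1}$ reassembles the left side into an exact derivative:
\begin{equation*}
\frac{d}{dt}\bigl(t^{\alpha}\dot w(t)\bigr)=t^{\alpha}\ddot w(t)+\alpha t^{\alpha-1}\dot w(t)\le t^{\alpha-1}g(t).
\end{equation*}
Integrating from $\delta$ to $t$ and dividing by $t^{\alpha}$ yields
\begin{equation*}
\dot w(t)\le \frac{\delta^{\alpha}\dot w(\delta)}{t^{\alpha}}+\frac{1}{t^{\alpha}}\int_{\delta}^{t}s^{\alpha-1}g(s)\,ds,
\end{equation*}
so that
\begin{equation*}
[\dot w(t)]_{+}\le \frac{|\delta^{\alpha}\dot w(\delta)|}{t^{\alpha}}+\frac{1}{t^{\alpha}}\int_{\delta}^{t}s^{\alpha-1}g(s)\,ds.
\end{equation*}

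Next I would integrate this pointwise bound over $[\delta,+\infty)$. The first term is integrable because $\alpha>1$. For the second term I would apply Fubini's theorem to exchange the order of integration, using that $g\ge 0$:
\begin{equation*}
\int_{\delta}^{+\infty}\!\frac{1}{t^{\alpha}}\int_{\delta}^{t}s^{\alpha-1}g(s)\,ds\,dt
=\int_{\delta}^{+\infty}s^{\alpha-1}g(s)\int_{s}^{+\infty}\!\frac{dt}{t^{\alpha}}\,ds
=\frac{1}{\alpha-1}\int_{\delta}^{+\infty}g(s)\,ds<+\infty,
\end{equation*}
where $\alpha>1$ is used both to make the inner integral finite and to cancel the factor $s^{\alpha-1}$. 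Hence $[\dot w]_{+}\in L^{1}(\delta,+\infty)$.

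Finally, to obtain the existence of $\lim_{t\to+\infty}w(t)$, I would introduce the auxiliary function
\begin{equation*}
W(t):=w(t)-\int_{\delta}^{t}[\dot w(s)]_{+}\,ds.
\end{equation*}
Its derivative $\dot W(t)=\dot w(t)-[\dot w(t)]_{+}=-[\dot w(t)]_{-}\le 0$, so $W$ is nonincreasing. Moreover $W$ is bounded below: since $w$ is bounded below by hypothesis and the integral appearing in $W$ is bounded by $\|[\dot w]_{+}\|_{L^{1}}$, we have $W(t)\ge \inf w-\|[\dot w]_{+}\|_{L^{1}}>-\infty$. Therefore $W(t)$ has a finite limit as $t\to+\infty$, and adding the convergent integral $\int_{\delta}^{t}[\dot w(s)]_{+}\,ds$ back shows that $\lim_{t\to+\infty}w(t)$ exists. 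The main (very mild) obstacle is the Fubini computation: one must make sure that $\alpha>1$ is used correctly so that both the pointwise estimate and the double-integral bound are finite; the rest is straightforward.
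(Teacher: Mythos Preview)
Your proof is correct and follows essentially the same approach as the paper: multiply by $t^{\alpha-1}$ to obtain an exact derivative, integrate and take the positive part, use Fubini to show $[\dot w]_+\in L^1$, and then conclude via the auxiliary function $w(t)-\int_\delta^t[\dot w]_+$. The paper's argument is identical in structure and detail (it denotes your $W$ by $\theta$).
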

 
\begin{proof} 
Multiply \eqref{basic-edo1} by $t^{\alpha-1}$ to obtain
$$
\frac{d}{dt} \big(t^{\alpha} \dot w(t)\big) \leq t^{\alpha-1} g(t).
$$
By integration, we obtain
$$
\dot w(t) \leq \frac{{\delta}^{\alpha}|\dot w(\delta)|}{t^{\alpha} }  +  \frac{1}{t^{\alpha} }  \int_{\delta}^t  s^{\alpha-1} g(s)ds.
$$
Hence, 
$$
[\dot w]_{+}(t)  \leq \frac{{\delta}^{\alpha}|\dot w(\delta)|}{t^{\alpha} }  +  \frac{1}{t^{\alpha} }  \int_{\delta}^t  s^{\alpha-1} g(s)ds,
$$
and so,
$$
\int_{\delta}^{\infty} [\dot w]_{+}(t)  dt \leq \frac{{\delta}^{\alpha}|\dot w(\delta)|}{(\alpha -1) \delta^{\alpha -1}} +
  \int_{\delta}^{\infty}\frac{1}{t^{\alpha}}  \left(  \int_{\delta}^t  s^{\alpha-1} g(s) ds\right)  dt.
$$
Applying Fubini's Theorem, we deduce that
$$
\int_{\delta}^{\infty}\frac{1}{t^{\alpha}}  \left(  \int_{\delta}^t  s^{\alpha-1} g(s) ds\right)  dt =     
\int_{\delta}^{\infty}  \left(  \int_{s}^{\infty}  \frac{1}{t^{\alpha}} dt\right) s^{\alpha-1} g(s) ds 
= \frac{1}{\alpha -1} \int_{\delta}^{\infty}g(s) ds.
$$
As a consequence,
$$
 \int_{\delta}^{\infty} [\dot w]_{+}(t) dt \leq \frac{{\delta}^{\alpha}|\dot w(\delta)  |}{(\alpha -1) \delta^{\alpha -1}}  +
 \frac{1}{\alpha -1} \int_{\delta}^{\infty}g(s)  ds < + \infty.
$$
Finally, the function $\theta:[\delta,+\infty)\to\R$, defined by
$$\theta(t)=w(t)-\int_{\delta}^{t}[\dot w]_+(\tau)\,d\tau,$$
is nonincreasing and bounded from below. It follows that
$$\lim_{t\to+\infty}w(t)=\lim_{t\to+\infty}\theta(t)+\int_{\delta}^{+\infty}[\dot w]_+(\tau)\,d\tau$$ 
exists.
\end{proof}

The following is a continuous version of Kronecker's Theorem for series.

\begin{lemma}\label{basic-int} 
Take $\delta >0$, and let $f \in L^1 (\delta , +\infty)$ be nonnegative and continuous. Consider a nondecreasing function $\psi:[\delta,+\infty[\to ]0,+\infty[$ such that $\lim\limits_{t\to+\infty}\psi(t)=+\infty$. Then, 
$$\lim_{t \rightarrow + \infty} \frac{1}{\psi(t)} \int_{\delta}^t \psi(s)f(s)ds =0.$$ 
\end{lemma}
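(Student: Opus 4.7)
The plan is to prove this by the standard splitting trick used for Kronecker-type statements: since $f \in L^1(\delta,+\infty)$, the tail of $\int f$ can be made arbitrarily small, while on the initial segment the integrand is controlled by a fixed constant that becomes negligible once we divide by $\psi(t) \to +\infty$.

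More concretely, I would fix an arbitrary $\varepsilon > 0$ and use integrability of $f$ to choose $T > \delta$ such that $\int_T^{+\infty} f(s)\,ds < \varepsilon$. For $t > T$, I would write the decomposition
\begin{equation*}
\frac{1}{\psi(t)} \int_{\delta}^{t} \psi(s) f(s)\, ds = \frac{1}{\psi(t)} \int_{\delta}^{T} \psi(s) f(s)\, ds + \frac{1}{\psi(t)} \int_{T}^{t} \psi(s) f(s)\, ds.
\end{equation*}
The first term is the quotient of a fixed finite number (since $\psi$ is continuous-or-at-least-bounded on $[\delta,T]$, being nondecreasing with finite value $\psi(T)$, and $f$ is continuous) by $\psi(t)$, so it tends to $0$ as $t \to +\infty$ by the assumption $\psi(t) \to +\infty$.

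For the second term I would exploit monotonicity of $\psi$: since $\psi$ is nondecreasing, for every $s \in [T,t]$ we have $\psi(s) \leq \psi(t)$, hence
\begin{equation*}
\frac{1}{\psi(t)} \int_{T}^{t} \psi(s) f(s)\, ds \leq \int_{T}^{t} f(s)\, ds \leq \int_{T}^{+\infty} f(s)\, ds < \varepsilon.
\end{equation*}
Combining the two estimates gives $\limsup_{t \to +\infty} \frac{1}{\psi(t)} \int_{\delta}^{t} \psi(s) f(s)\, ds \leq \varepsilon$, and since $f \geq 0$ the quantity is nonnegative. As $\varepsilon > 0$ is arbitrary, the limit exists and equals zero.

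There is no real obstacle here: the only subtle point is making sure that $\int_\delta^T \psi(s) f(s)\,ds$ is indeed finite, which follows from $\psi$ being nondecreasing (hence bounded by $\psi(T)$ on $[\delta,T]$) together with $f \in L^1$. Everything else is a direct $\varepsilon$-argument using the tail of the $L^1$ function and the monotonicity of $\psi$.
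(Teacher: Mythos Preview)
Your proof is correct and follows essentially the same approach as the paper: fix $\varepsilon$, choose a cutoff $T$ so that the $L^1$ tail of $f$ is below $\varepsilon$, split the integral at $T$, use $\psi(s)\le\psi(t)$ on the tail and $\psi(t)\to+\infty$ on the initial segment. The only difference is that you make explicit why $\int_\delta^T \psi(s)f(s)\,ds$ is finite, which the paper leaves implicit.
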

 \begin{proof} 
Given $\epsilon >0$, fix $t_\epsilon$ sufficiently large so that 
$$\int_{t_\epsilon}^{\infty}  f(s) ds \leq \epsilon.$$
Then, for $t \geq t_\epsilon$, split the integral $\int_{\delta}^t \psi(s)f(s) ds$ into two parts to obtain
$$\frac{1}{\psi(t)} \int_{\delta}^t \psi(s)f(s)ds =
\frac{1}{\psi(t)}\int_{\delta}^{t_\epsilon} \psi(s) f(s) ds
+ \frac{1}{\psi(t)}\int_{t_\epsilon}^t \psi(s) f(s) ds 
\leq   \frac{1}{\psi(t)}\int_{\delta}^{t_\epsilon} \psi(s)f(s) ds
+ \int_{t_\epsilon}^t  f(s) ds.$$
Now let $t\to+\infty$ to deduce that
$$0\le\limsup_{t\to+\infty}\frac{1}{\psi(t)}\int_{\delta}^t \psi(s)f(s)ds \le \epsilon.$$
Since this is true for any $\epsilon>0$, the result follows.
\end{proof}

\smallskip

\end{document}